\documentclass[12pt,twoside]{article}
  \usepackage{geometry}
\geometry{a4paper}

\usepackage{amssymb}
\usepackage{amsmath}
\usepackage{amsthm}
\usepackage{mathrsfs}
\usepackage{cite}
\usepackage{color}
\usepackage{changes}
\usepackage[colorlinks,
linkcolor=blue,
anchorcolor=blue,
citecolor=red
]{hyperref}

 \def\rr{\mathbb{R}}
\def\zz{\mathbb{Z}}
\def\nn{\mathbb{N}}
\def\cp{\mathcal{P}}
\def\cf{\mathcal{F}}

\def\rn{\mathbb{R}^n}
\def\zn{\mathbb{Z}^n}

\def\no{\nonumber}

\def\les{\lesssim}

\newtheorem{thm}{Theorem}[section]
\newtheorem{rem}{Remark}[section]
\newtheorem{lem}{Lemma}[section]

\newtheorem{defn}{Definition}[section]

\def\XXint#1#2#3{{
\setbox0=\hbox{$#1{#2#3}{\int}$}
\vcenter{\hbox{$#2#3$}}\kern-.5\wd0}}

\baselineskip=15pt

\arraycolsep=1.5pt

\allowdisplaybreaks[4]
\numberwithin{equation}{section}
\begin{document}
\title{Precompact Sets in Matrix Weighted Lebesgue  Spaces with Variable Exponent \footnotetext{$\ast$ The corresponding author J. S. Xu  jingshixu@126.com\\
The work is supported by the National Natural Science Foundation of China (Grant No. 12161022 and 12061030) and the Science and Technology Project of Guangxi (Guike AD23023002)}}
\author{Shengrong Wang\textsuperscript{a}, Pengfei Guo\textsuperscript{a}, Jingshi Xu\textsuperscript{b,c,d*}\\
{\scriptsize  \textsuperscript{a}School of Mathematics and Statistics, Hainan Normal University, Haikou, 571158, China}\\
{\scriptsize  \textsuperscript{b}School of Mathematics and Computing Science, Guilin University of Electronic Technology, Guilin 541004, China} \\
{\scriptsize  \textsuperscript{c} Center for Applied Mathematics of Guangxi (GUET), Guilin 541004, China}\\
{\scriptsize  \textsuperscript{d}Guangxi Colleges and Universities Key Laboratory of Data Analysis and Computation, Guilin 541004, China}}
\date{}

\maketitle

{\bf Abstract.} In this paper, we first give a sufficiently condition for precompactness in the matrix-weighted Lebesgue spaces with variable exponent by translation operator. Then we obtain a criterion for precompactness in the matrix-weighted Lebesgue space with variable exponent by average operator. Next, we give a criterion for precompactness in the matrix-weighted Lebesgue space with variable exponent by approximate identity.
Finally, precompactness in the matrix-weighted Sobolev space with variable exponent is also considered.

 {\bf Key words and phrases.} compactness, Lebesgue space, matrix weight, Sobolev space, variable exponent.

{\bf Mathematics Subject Classification (2020).} 46B50, 46E40, 46E30

\section{Introduction}
Precompact set in function spaces are often crucial in the proof of the existence of nonlinear partial differential equations.
Kolmogorov first proposed a criterion for compactness in Lebesgue spaces in \cite{ak-1}.
Since then, the compactness criterion for subsets in Lebesgue spaces has been studied in various contexts, such as the generalization of the Kolmogorov-Tulajkov-Sudakov criterion to the case of variable exponential spaces, and the generalization of the Kolmogorov compactness criterion to the weighted case; see \cite{ban-1,ban-2,RH-1,gma-1,dfx-1,fhs-1,lyz}.

In this paper we extend Kolmogorov compactness criterion to the case of  matrix weighted  Lebesgue spaces with variable exponent.
The plan of the paper is as follows. In Section \ref{wbl-s-1}, we collect some notations.
In  section  \ref{wbl-s-5}, we give a sufficient  condition for a precompact set in  matrix-weighted Lebesgue space with variable exponent by translation operator.
In Section \ref{wbl-s-2}, we give the compactness of the average operator  on matrix weighted Lebesgue spaces with variable exponent.
In Section \ref{wbl-s-3}, we give the compactness of identity approximation  on matrix weighted Lebesgue spaces with variable exponent.
In Section \ref{wbl-s-4}, we generalize the results of Sections \ref{wbl-s-5}, \ref{wbl-s-2} and \ref{wbl-s-3} to matrix weighted Sobolev space with variable exponent.

\section{Notations and preliminaries}\label{wbl-s-1}

In this section, we first recall some definitions and notations. Let $\nn$ be the collection of all natural numbers and $\nn_0 = \nn \cup \{0\}$. Let $\zz$ be the collection of all integers.
Let $\rn$ be $n$-dimensional Euclidean space, where $n \in \nn$. Put $\rr= \rr^1$, whereas $\mathbb{C}$ is the complex plane.
In the sequel, $C$ denotes positive constants, but it may change from line to line. For any quantities $A$ and $B$, if there exists a constant $C>0$ such that $A\leq CB$, we write $A \lesssim B$. If $A\lesssim B$ and $B \lesssim A$, we write $A \sim B$.

Let $\mathcal{D}$ be the collection of dyadic cubes in $\mathbb{R}^n.$ For $\nu\in\mathbb{Z}$ and $k\in\mathbb{Z}^n,$ denote by $Q_{\nu k}$ the dyadic cube $2^{-\nu}([0,1)^n+k),$ $x_{Q_{\nu k}}:=2^{-\nu}k$ its ``lower left corner'' and $l(Q_{\nu k})$ its side length.
Let $\mathcal{D}_\nu:=\{Q_{\nu k}:=Q:l(Q)=2^{-\nu},k\in\mathbb{Z}^n\}$.

 For a measurable function $p(\cdot)$ on $\rn$ taking values in $\ (0,\infty)$, we denote ${p^- }: =  {\rm ess}\inf_{x \in \rn} p(x),$ ${p^+ }: =  {\rm ess} \sup_{x \in\rn} p(x).$
The set $\cp(\rn)$ consists of all $p(\cdot)$ satisfying $p^->1$ and $p^+<\infty;$
$\cp_{0}(\rn)$ consists of all $p(\cdot)$ satisfying $p^->0$ and $p^+<\infty$.

Let $p(\cdot)$ be a measurable function on $\rn$ taking values in $(0,\infty)$.
And the Lebesgue space with variable exponent $L^{p(\cdot)}(\rn)$ is defined by
\[L^{p(\cdot)}(\rn): = \bigg\{f \text{ is measurable on}\ \rn:\ \int_{\rn} \bigg( \frac{|f(x)|}
{\lambda } \bigg)^{p(x)}\, {\rm d} x < \infty\text{ for some }\lambda>0 \bigg\}.
\]
Then $L^{p(\cdot)}(\rn)$ is  equipped with the (quasi) norm
\[\| f \|_{L^{p(\cdot)}}: = \inf \bigg\{ \lambda  > 0:\int_{\rn} \bigg( \frac{|f(x)|}{\lambda } \bigg)^{p(x)}{\rm d} x \leq 1  \bigg\}.\]
Denote by $L^\infty(\rn)$ the set of all measurable functions $f$ such that
\[ \|f\|_{L^\infty} := \mathop{{\rm ess}\sup}_{y \in \rn} |f(y)| < \infty.\]
The space $L^{p(\cdot)} _{\rm loc}(\rn)$ is defined by
\[L^{p(\cdot)} _{\rm loc}(\rn):=\{f:f\chi_{K} \in L^{p(\cdot)}(\rn) {\text{ for all compact subsets }} K \subset \rn\},\]
where and what follows, $\chi_{S}$ denotes the characteristic function of a measurable set $S\subset \rn.$

\begin{defn}\label{cd-1}
Let $\alpha(\cdot)$ be a real-valued measurable function on $\rn$.

{\rm (i)} The function $\alpha(\cdot)$ is locally $\log$-H\"{o}lder continuous if there exists a constant $C_{\log}(\alpha)$ such that
\[|\alpha (x) - \alpha (y)| \leq \frac{C_{\log}(\alpha)}{\log ( e + 1/|x - y|)}, \ x,y \in \mathbb{R}^n,\ |x - y| < \frac{1}{2}.\]
Denote by $C_{\rm loc}^{\log}(\rn)$ the set of all locally log-H\"{o}lder continuous

{\rm (ii)} The function $\alpha(\cdot)$ is $\log$-H\"{o}lder continuous at the origin if there exists a constant $C_2$ such that
\[|\alpha (x) - \alpha (0)| \leq \frac{C_2} {\log ( e + 1/| x | )}, \ \forall x \in \rn.\]
Denote by $C^{\log}_0(\rn)$ the set of all $\log$-H\"{o}lder continuous functions at the origin.

{\rm (iii)} The function $\alpha(\cdot)$ is $\log$-H\"{o}lder continuous at infinity if there exists $\alpha_{\infty}\in \mathbb{R}$ and a constant $C_3$ such that
\[|\alpha (x) - \alpha _\infty|\leq \frac{C_3} {\log ( e + | x | )}, \ \forall x \in \rn.\]
Denote by $C^{\log}_\infty(\rn)$ the set of all $\log$-H\"{o}lder continuous functions at infinity.

{\rm (iv)} The function $\alpha(\cdot)$ is global $\log$-H\"{o}lder continuous if $\alpha(\cdot)$ are both locally $\log$-H\"{o}lder continuous and $\log$-H\"{o}lder continuous at infinity.
Denote by $C^{\log}(\rn)$ the set of all global $\log$-H\"{o}lder continuous functions.
\end{defn}

For weights, an important class is the following Muckenhoupt $A_p$ class with constant exponent $p \in [1,\infty]$ which was firstly proposed by Muckenhoupt in \cite{bt22}.

\begin{defn}\label{vd3}
Fix $p \in (1,\infty)$. A positive measurable function $w$ is said to be in the Muckenhoupt class $A_p(\rn)$, if there exists a positive constant $C$ for all balls $B$ in $\rn$ such that
\[ [W]_{A_{p}}:=\sup_{\text{all ball} \ B \subset \rn} \bigg(\frac{1}{|B|}\int_B w(x) {\rm d}x \bigg) \bigg(\frac{1}{|B|}\int_B w(x)^{1-p^{\prime}}{\rm d}x \bigg)^{p-1}<\infty.\]
We say $w \in A_1(\rn)$, if $Mw(x) \leq Cw(x)$ for a.e. $x\in \rn$. We denote $A_\infty :=\cup_{p\geq1} A_p(\rn)$.
\end{defn}

\begin{defn}\label{dg1}
Let $p(\cdot) \in \mathcal{P}(\rn)$. A nonnegative measurable function $w$ is said to be in $A_{p(\cdot)}$,
if there exists a positive constant $C$ for all ball $B$ in $\rn$ such that
\[ \|w\|_{A_{p(\cdot)}}:=\frac{1}{|B|}\|w\chi_B\|_{L^{p(\cdot)}} \|w^{-1}\chi_B\|_{L^{p^{\prime}(\cdot)}} \leq C.\]
\end{defn}

\begin{rem}\label{wbl-R1}
In \cite[Remark 2.3]{cw-1}, Cruz-Uribe and Wang gave that if $w \in A_{p(\cdot)}$, then $w \in L^{p(\cdot)}_{\rm loc}$ and $w^{-1} \in L^{p'(\cdot)}_{\rm loc}$, and if $w \in A_{p(\cdot)}$, then $w^{-1} \in A_{p'(\cdot)}$.
\end{rem}

Now we recall some basic matrix concepts.
For any $m \in \nn$, $M_{m}(\rr)$ is denoted as the set of all $m \times m$ real-valued matrices.
For any $A \in M_m(\rr)$, let
\[|A|_{\rm op}:= \sup_{|\vec{z}|=1} |A\vec{z}|,\]
where $\vec{z}:= (z_1,...,z_m)^{\rm T} \in \rr^m$ and $|\vec{z}| =: (\sum_{i=1}^m |z_i|^2)^{1/2}$, T denotes the transpose of the row vector.

Diagonal matrix $A$ can be denoted as $A={\rm diag}(\lambda_1,...,\lambda_m)$, where $\{\lambda_i\}_{i=1}^m \subset \rr$ .
If $\lambda_1=\cdots =\lambda_m=1$ in the diagonal matrix above, it is called the identity matrix and is denoted by $I_m$.
If there is a matrix $A^{-1} \in M_m(\rr)$ such that $A^{-1}A = I_m$, then matrix $A$ is said to be invertible.

\begin{defn}
A matrix $A  \in M_m(\rr)$ is called positive definite, and if for any $\vec{z}\in \rr^m \setminus \{\vec{0}\}$, $(A\vec{z},\vec{z})>0$. $A  \in M_m(\rr)$ is nonnegative positive definite, and if for any $\vec{z}\in \rr^m \setminus \{\vec{0}\}$, $(A\vec{z},\vec{z}) \geq 0$.
\end{defn}

\begin{defn}
A matrix-valued function $W:\rn \rightarrow M_m(\rr)$ is called a matrix weight if $W$ satisfies that\\
{\rm (i)} for any $x \in \rn, W(x)$ is nonnegative definite;\\
{\rm (ii)} for almost every $x \in \rn$, $W(x)$ is invertible;\\
{\rm (iii)} the entries of $W$ are all locally integrable.
\end{defn}

\begin{defn}\label{wdf-1}
Let $p(\cdot) \in \mathcal{P}(\rn)$. A nonnegative measurable function $W$ is said to be in $\mathcal{A}_{p(\cdot)}$,
if there exists a positive constant $C$ such that
\[ [W]_{\mathcal{A}_{p(\cdot)}}:=\sup_{Q}|Q|^{-1} \big\| \| |W(x) W^{-1}(\cdot)|_{\rm op} \chi_Q(\cdot) \|_{L^{p'(\cdot)}} \chi_Q(x)  \big\|_{L^{p(\cdot)}_x} \leq C,\]
where the supremum is taken over all cubes $Q$ in $\rn$.
\end{defn}

\begin{defn}
Let  $p(\cdot) \in \mathcal{P}(\rn)$. Then $L^{p(\cdot)}(\rn)$ is the set of all measurable vector-valued functions $\vec{f}:=(f_1,f_2, \ldots,f_m)^{\rm T}: \rn\rightarrow \rr^m$ satisfying
\[ \|\vec{f}\|_{L^{p(\cdot)}} := \| |\vec{f}|\|_{L^{p(\cdot)}} < \infty. \]
\end{defn}

\begin{defn}
Let $W$ be a matrix weight from $\rn$ to $M_m(\rr)$ and $p(\cdot) \in \mathcal{P}(\rn)$. Then $L^{p(\cdot)}(W)$ is the set of all measurable vector-valued functions $\vec{f}:=(f_1,f_2, \ldots,f_m)^{\rm T}: \rn\rightarrow \rr^m$ satisfying
\[ \|\vec{f}\|_{L^{p(\cdot)}(W)} := \| |W\vec{f}|\|_{L^{p(\cdot)}} < \infty. \]
\end{defn}

\begin{lem}[see {\cite[Proposition 3.5]{cp-1}}]\label{wbl-L8}
Let $p(\cdot) \in \cp(\rn)$ and $W \in A_{p(\cdot)}$. Then $C^\infty_c(\rn)$ is dense in $L^{p(\cdot)}(W)$.
\end{lem}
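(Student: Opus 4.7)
The plan is the familiar truncation-plus-mollification scheme, with care taken to accommodate the matrix weight. The key reduction is to show that each scalar column norm $w_j(x) := |W(x)e_j|$ is locally in $L^{p(\cdot)}(\rn)$, after which pointwise and dominated convergence carry the argument.

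First, I would truncate: given $\vec f \in L^{p(\cdot)}(W)$, set
\[
\vec f_N(x) := \vec f(x)\,\chi_{B(0,N)}(x)\,\chi_{\{|W\vec f|\le N\}}(x).
\]
Then $|W(x)(\vec f - \vec f_N)(x)|$ is pointwise dominated by $|W(x)\vec f(x)| \in L^{p(\cdot)}(\rn)$ and tends to $0$ almost everywhere, so the dominated convergence theorem in $L^{p(\cdot)}(\rn)$ (valid since $p^+<\infty$) yields $\|\vec f - \vec f_N\|_{L^{p(\cdot)}(W)} \to 0$. Each $\vec f_N$ is thus essentially bounded and supported in $B(0,N)$.

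Next I would regularize $\vec f_N$ by convolution with a standard mollifier $\varphi_\varepsilon$. Setting $\vec g_\varepsilon := \vec f_N * \varphi_\varepsilon$ componentwise gives $\vec g_\varepsilon \in C_c^\infty(\rn;\rr^m)$, supported in $B(0,N+1)$ for $\varepsilon<1$. Writing $\vec f_N = \sum_j f_{N,j} e_j$ and $\vec g_\varepsilon = \sum_j g_{\varepsilon,j} e_j$, one has the pointwise bound
\[
|W(x)(\vec g_\varepsilon(x)-\vec f_N(x))|\ \le\ \sum_{j=1}^m |g_{\varepsilon,j}(x)-f_{N,j}(x)|\,w_j(x).
\]
Each scalar factor $|g_{\varepsilon,j}-f_{N,j}|$ tends to $0$ almost everywhere (Lebesgue differentiation) and is uniformly bounded by $2\|f_{N,j}\|_\infty$ on $B(0,N+1)$. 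Provided each $w_j\chi_{B(0,N+1)}\in L^{p(\cdot)}(\rn)$, one more application of dominated convergence in $L^{p(\cdot)}(\rn)$ gives $\|\vec g_\varepsilon-\vec f_N\|_{L^{p(\cdot)}(W)}\to 0$, which combined with the truncation step proves the density.

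The main obstacle is thus establishing $w_j\in L^{p(\cdot)}_{\rm loc}(\rn)$. Fixing a cube $Q$ and any $y_0\in Q$ with $|W(y_0)e_j|<\infty$, the identity $W(y)e_j = W(y)W^{-1}(y_0)\,W(y_0)e_j$ gives $w_j(y)\le |W(y)W^{-1}(y_0)|_{\rm op}\,|W(y_0)e_j|$ on $Q$, so it suffices to bound $y\mapsto |W(y)W^{-1}(y_0)|_{\rm op}\chi_Q(y)$ in $L^{p(\cdot)}$. The $\mathcal{A}_{p(\cdot)}$ condition of Definition \ref{wdf-1} directly controls $\||W(x)W^{-1}(y)|_{\rm op}\chi_Q(y)\|_{L^{p'(\cdot)}}$ in the $x$-variable, with the variables playing the \emph{opposite} roles to what is needed. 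To bridge the gap I would invoke the matrix analogue of the second assertion in Remark \ref{wbl-R1}, namely $W\in\mathcal{A}_{p(\cdot)}\Leftrightarrow W^{-1}\in\mathcal{A}_{p'(\cdot)}$; this follows from the symmetry of the positive-definite matrices $W(x),W^{-1}(x)$ combined with the identity $|AB|_{\rm op}=|B^{\top}A^{\top}|_{\rm op}$. Applying this duality to $W^{-1}$ yields $\||W(y)W^{-1}(y_0)|_{\rm op}\chi_Q\|_{L^{p(\cdot)}}<\infty$ for almost every $y_0\in Q$, which closes the argument.
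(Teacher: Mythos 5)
First, a remark on the comparison itself: the paper does not prove this lemma --- it is imported verbatim from \cite[Proposition 3.5]{cp-1} (and the hypothesis should read $W\in\mathcal{A}_{p(\cdot)}$, the matrix class of Definition \ref{wdf-1}, rather than the scalar class $A_{p(\cdot)}$). So your truncation-plus-mollification scheme can only be judged on its own merits; it is the right strategy, but two steps do not hold as written. The first is the truncation: $\vec f_N=\vec f\,\chi_{B(0,N)}\chi_{\{|W\vec f|\le N\}}$ bounds $|W\vec f_N|$, not $|\vec f_N|$. On the set where $|W\vec f|\le N$ one only gets $|\vec f|\le N\,|W^{-1}|_{\rm op}$, and $|W^{-1}|_{\rm op}$ is in general unbounded, so the claim that $\vec f_N$ is essentially bounded --- on which your mollification step relies through the bound $|g_{\varepsilon,j}-f_{N,j}|\le 2\|f_{N,j}\|_\infty$ --- fails. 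The repair is immediate (truncate on $\{|\vec f|\le N\}$ instead; dominated convergence applies exactly as before since $|W(\vec f-\vec f_N)|\le|W\vec f|$ pointwise and tends to $0$ a.e.), but as written the step is wrong.

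The second gap is the justification of the duality $W\in\mathcal{A}_{p(\cdot)}\Rightarrow W^{-1}\in\mathcal{A}_{p'(\cdot)}$. Transposition does convert the integrand $|W^{-1}(x)W(y)|_{\rm op}$ into $|W(y)W^{-1}(x)|_{\rm op}$, but after this substitution $[W^{-1}]_{\mathcal{A}_{p'(\cdot)}}$ becomes the iterated mixed norm taken in the \emph{opposite order} from $[W]_{\mathcal{A}_{p(\cdot)}}$ (an $L^{p'(\cdot)}_x$ norm of an $L^{p(\cdot)}_y$ norm versus an $L^{p(\cdot)}_x$ norm of an $L^{p'(\cdot)}_y$ norm), and interchanging the order of mixed norms is not an identity even for constant exponents. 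The duality is true, but it is a theorem in \cite{cp-1}, not a one-line consequence of symmetry, so as stated this step is a genuine hole. More to the point, the whole detour through the duality and the fixed point $y_0$ is unnecessary: Lemma \ref{wbl-L9} gives $|W|_{\rm op}\in A_{p(\cdot)}$, Remark \ref{wbl-R1} then gives $|W|_{\rm op}\in L^{p(\cdot)}_{\rm loc}$, and $w_j(x)=|W(x)e_j|\le|W(x)|_{\rm op}$, so your ``main obstacle'' $w_j\in L^{p(\cdot)}_{\rm loc}$ follows in one line from results already quoted in the paper. With these two repairs your argument becomes a correct, self-contained proof of the density statement.
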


\begin{lem}[see {\cite[Proposition 4.8]{cp-1}}]\label{wbl-L9}
Let $W$ be a matrix weight from $\rn$ to $M_m(\rr)$ and $v(x)=|W|_{\rm op}$. If $W \in \mathcal{A}_{p(\cdot)}$, then $v \in A_{p(\cdot)}$.
\end{lem}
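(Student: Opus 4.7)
The plan is to compare the scalar $A_{p(\cdot)}$-quantity of $v = |W|_{\rm op}$ with the matrix $\mathcal{A}_{p(\cdot)}$-quantity of $W$ cube by cube. The pointwise engine of the proof is submultiplicativity of the operator norm: since $W(x)$ is invertible for almost every $x$, one has $W(x) = W(x) W^{-1}(y) W(y)$ for a.e.\ $x, y \in \rn$, and hence
\[ v(x) = |W(x)|_{\rm op} \leq |W(x)W^{-1}(y)|_{\rm op}\, |W(y)|_{\rm op} = |W(x)W^{-1}(y)|_{\rm op}\, v(y), \]
which I rewrite as $v(x)\, v(y)^{-1} \leq |W(x) W^{-1}(y)|_{\rm op}$.

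Second, for a fixed cube $Q$, I would reorganize the $A_{p(\cdot)}$ product as a nested norm. Since $\|v^{-1}\chi_Q\|_{L^{p'(\cdot)}}$ is a constant in $x$, one has
\[ \|v\chi_Q\|_{L^{p(\cdot)}} \|v^{-1}\chi_Q\|_{L^{p'(\cdot)}} = \Big\| v(x)\chi_Q(x) \big\| v(y)^{-1} \chi_Q(y) \big\|_{L^{p'(\cdot)}_y} \Big\|_{L^{p(\cdot)}_x}. \]
Pulling the $y$-independent scalar $v(x)$ inside the inner norm and applying the pointwise bound from the first step give
\[ \|v\chi_Q\|_{L^{p(\cdot)}} \|v^{-1}\chi_Q\|_{L^{p'(\cdot)}} \leq \Big\| \big\| |W(x)W^{-1}(y)|_{\rm op}\, \chi_Q(y) \big\|_{L^{p'(\cdot)}_y} \chi_Q(x) \Big\|_{L^{p(\cdot)}_x}. \]
By Definition \ref{wdf-1} the right-hand side is at most $|Q| \cdot [W]_{\mathcal{A}_{p(\cdot)}}$, so dividing by $|Q|$ and taking the supremum over $Q$ (the $A_{p(\cdot)}$ condition being equivalent whether tested on cubes or balls) yields $\|v\|_{A_{p(\cdot)}} \lesssim [W]_{\mathcal{A}_{p(\cdot)}}$.

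The step that requires a bit of care, though it is not deep, is justifying that a nonnegative scalar can be pulled through a variable-exponent Lebesgue norm; this is immediate from the Luxemburg-functional definition of $\|\cdot\|_{L^{p(\cdot)}}$, but I would spell it out to be safe. I would also briefly check that $v > 0$ and $v < \infty$ almost everywhere so that the quotient $v(x) v(y)^{-1}$ is well defined, both of which follow from $W$ being invertible a.e.\ with locally integrable entries.
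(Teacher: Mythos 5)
Your argument is correct. Note that the paper does not prove this lemma at all --- it is quoted verbatim from \cite[Proposition 4.8]{cp-1} --- so there is no internal proof to compare against; your route (the submultiplicativity bound $v(x)v(y)^{-1}\leq|W(x)W^{-1}(y)|_{\rm op}$ fed into the nested-norm rewriting of the scalar $A_{p(\cdot)}$ product, using homogeneity and monotonicity of the Luxemburg norm) is the standard one and is essentially how the cited reference proceeds. The only points needing the care you already flag are that $v>0$ and $v<\infty$ a.e.\ (so the quotient is defined and $\|v\chi_Q\|_{L^{p(\cdot)}}>0$, avoiding a $0\cdot\infty$ ambiguity in the product), and the harmless discrepancy that Definition \ref{dg1} is stated for balls while Definition \ref{wdf-1} uses cubes.
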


We recall the well known Acoli-Arzela theorem, for example, see \cite{af-1} and \cite{db-1}.
What we need is a vector-valued version of the Acoli-Arzela theorem.
\begin{lem}[Acoli-Arzela theorem]\label{wbl-L1}
Let $(\Omega, \rho)$ be a compact metric space and $(E,\|\cdot\|)$ a Banach space.
Denote by $C(\Omega, E)$ the space of all continuous functions from $\Omega$ to $E$ with the supremum norm.
A subset $\cf$ of $C(\Omega, E)$ is precompact in $C(\Omega, E)$ if and only if the following two conditions hold.

{\rm (i)} For every $x \in \Omega$, the set $\{f(x) : f \in \cf\}$ is a precompact set in $E$.

{\rm (ii)} The subset $\cf$ is uniformly equicontinuous: for every $\epsilon > 0$, there exists a $\delta > 0$ such that $|f(x) -f(y)| <\epsilon$ for $f \in \cf$, $x$, $y\in E$ with $\rho(x, y) < \delta$.
\end{lem}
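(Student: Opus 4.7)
The plan is to prove both directions of this classical Arzelà--Ascoli equivalence, working in the Banach-space-valued setting exactly as in the scalar case but carrying the norm $\|\cdot\|$ of $E$ in place of absolute values.

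For the necessity direction, I would first note that for each fixed $x\in\Omega$ the evaluation map $\mathrm{ev}_x:C(\Omega,E)\to E$ given by $\mathrm{ev}_x(f)=f(x)$ is $1$-Lipschitz with respect to the supremum norm, hence continuous, so it carries the precompact set $\cf$ to a precompact subset of $E$, which gives (i). For (ii), I would cover $\cf$ by finitely many sup-norm balls $B(f_j,\epsilon/3)$, $j=1,\ldots,N$, use the fact that each $f_j$ is uniformly continuous on the compact metric space $(\Omega,\rho)$ to produce $\dz_j>0$ with $\rho(x,y)<\dz_j\Rightarrow\|f_j(x)-f_j(y)\|<\epsilon/3$, and then set $\dz=\min_j\dz_j$; a standard $3\epsilon$ argument passing through the nearest $f_j$ yields uniform equicontinuity of $\cf$.

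For the sufficiency direction, the aim is to show that $\cf$ is totally bounded in $C(\Omega,E)$; since $E$ is complete and $\Omega$ is compact, $C(\Omega,E)$ equipped with the sup norm is complete, so total boundedness will imply precompactness. Given $\epsilon>0$, I would use (ii) to pick $\dz>0$ with $\|f(x)-f(y)\|<\epsilon/3$ for all $f\in\cf$ whenever $\rho(x,y)<\dz$, then cover the compact space $\Omega$ by finitely many balls $B(x_1,\dz),\ldots,B(x_N,\dz)$. By (i) each set $\{f(x_i):f\in\cf\}$ is precompact in $E$, hence so is their product $K:=\prod_{i=1}^N\overline{\{f(x_i):f\in\cf\}}$ in $E^N$ (with, say, the max norm), so I can choose finitely many $f_1,\ldots,f_M\in\cf$ such that the tuples $(f_j(x_1),\ldots,f_j(x_N))$ form an $\epsilon/3$-net for $\{(f(x_1),\ldots,f(x_N)):f\in\cf\}$. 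Given any $f\in\cf$, pick $j$ with $\|f(x_i)-f_j(x_i)\|<\epsilon/3$ for all $i$; for arbitrary $x\in\Omega$, pick $x_i$ with $\rho(x,x_i)<\dz$, and estimate
\[
\|f(x)-f_j(x)\|\leq\|f(x)-f(x_i)\|+\|f(x_i)-f_j(x_i)\|+\|f_j(x_i)-f_j(x)\|<\epsilon,
\]
so that $\{f_1,\ldots,f_M\}$ is an $\epsilon$-net for $\cf$ in the sup norm.

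The two directions use only (a) uniform continuity of individual continuous functions on the compact domain $\Omega$, (b) the fact that a finite product of precompact sets in $E$ is precompact in $E^N$, and (c) completeness of $C(\Omega,E)$. No step requires $E$ to be finite dimensional; the entire scalar-valued argument transfers verbatim by replacing $|\cdot|$ with $\|\cdot\|_E$. The main subtlety, if any, is ensuring in the sufficiency part that the ``pointwise precompactness at each $x$'' hypothesis is really enough to give joint precompactness of the finite tuple $(f(x_1),\ldots,f(x_N))$; this is immediate from the fact that a finite Cartesian product of precompact sets is precompact in the product topology, so there is no genuine obstacle beyond writing down the $3\epsilon$-argument carefully.
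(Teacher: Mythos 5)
Your proof is correct. Note, however, that the paper does not actually prove this lemma: it is stated as a known result and simply referenced to Adams--Fournier and Bridges, so there is no in-paper argument to compare against. Your write-up is the standard and complete argument for the vector-valued Arzel\`a--Ascoli theorem: the necessity direction via the $1$-Lipschitz evaluation maps and a $3\epsilon$-argument through a finite net of $\cf$, and the sufficiency direction via total boundedness, using equicontinuity to reduce to finitely many sample points $x_1,\ldots,x_N$ and pointwise precompactness to extract a finite net of tuples in $E^N$. The two points you flag as potential subtleties are indeed the only ones that need care, and you handle both correctly: a finite product of precompact sets is precompact, and a totally bounded set admits a finite $\epsilon$-net consisting of its own elements, so the approximating functions $f_1,\ldots,f_M$ can be taken from $\cf$ itself. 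One cosmetic remark: the paper's statement of condition (ii) contains typos ($x,y\in E$ should read $x,y\in\Omega$, and $|f(x)-f(y)|$ should be $\|f(x)-f(y)\|$); your proof silently uses the correct reading, which is the right thing to do.
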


\begin{lem}[see {\cite[Proposition 3.2]{cp-1}}]\label{wbl-L2}
Let $p(\cdot) \in \cp(\rn)$. Then for any $f \in L^{p(\cdot)}(\rn)$, there exists $g \in
L^{p'(\cdot)}(\rn)$ with $\|g\|_{L^{p'(\cdot)}} \leq d$ such that
\[\|\vec{f}\|_{L^{p(\cdot)}}\leq 4 \int_{\rn} \vec{f}(x) \cdot \vec{g}(x)dx.\]
\end{lem}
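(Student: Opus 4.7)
\medskip
\noindent\textbf{Proof proposal.} The plan is to reduce this vector-valued norm-duality statement to the known scalar norm-conjugate formula in $L^{p(\cdot)}(\rn)$, applied to the pointwise Euclidean norm $|\vec f|$, and then to lift the resulting scalar test function to a vector-valued one by polarizing it along the direction of $\vec f$. The constant $d$ in the statement is naturally taken to be $1$, and the key input is that $p(\cdot)\in\cp(\rn)$ forces $p^->1$ and $p^+<\infty$, so the associate space of $L^{p(\cdot)}(\rn)$ is (up to equivalence of norms) $L^{p'(\cdot)}(\rn)$.

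First, I would invoke (or reprove from the definition of the Luxemburg norm) the scalar form: for every nonnegative $h\in L^{p(\cdot)}(\rn)$ there exists a nonnegative $\varphi\in L^{p'(\cdot)}(\rn)$ with $\|\varphi\|_{L^{p'(\cdot)}}\leq 1$ such that
\[
\|h\|_{L^{p(\cdot)}}\leq 4\int_{\rn} h(x)\varphi(x)\,dx.
\]
This is the standard associate-norm characterization of $L^{p(\cdot)}(\rn)$; the factor $4$ is a comfortable constant (the sharp constant is $2$), and $\varphi$ may, in a routine construction, be produced from the modular of $h$ by setting $\varphi(x):=c\,h(x)^{p(x)-1}\|h\|_{L^{p(\cdot)}}^{1-p(x)}$ with an appropriate normalizing scalar $c$, then verifying via the generalized H\"older inequality that the claimed bound holds.

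Next, I apply this scalar statement to $h(x):=|\vec f(x)|$, which lies in $L^{p(\cdot)}(\rn)$ by the very definition of $\|\vec f\|_{L^{p(\cdot)}}$. This produces $\varphi\geq 0$ in $L^{p'(\cdot)}(\rn)$ with $\|\varphi\|_{L^{p'(\cdot)}}\leq 1$ and $\||\vec f|\|_{L^{p(\cdot)}}\leq 4\int_{\rn} |\vec f|\varphi\,dx$. I then define the measurable vector field
\[
\vec g(x):=\begin{cases}\varphi(x)\,\dfrac{\vec f(x)}{|\vec f(x)|},&\vec f(x)\neq \vec 0,\\[2pt] \vec 0,&\vec f(x)=\vec 0,\end{cases}
\]
which satisfies $|\vec g(x)|=\varphi(x)$ and $\vec f(x)\cdot\vec g(x)=|\vec f(x)|\varphi(x)$ pointwise. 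Hence $\|\vec g\|_{L^{p'(\cdot)}}=\|\varphi\|_{L^{p'(\cdot)}}\leq 1$, and substituting back yields
\[
\|\vec f\|_{L^{p(\cdot)}}=\||\vec f|\|_{L^{p(\cdot)}}\leq 4\int_{\rn}\vec f(x)\cdot\vec g(x)\,dx,
\]
as claimed.

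The potential obstacle is not in the lifting, which is essentially the equality case of Cauchy--Schwarz and thus a one-line polarization, but rather in pinning down the sharp form of the scalar duality with an explicit constant; any version that yields a factor $C$ independent of $h$ would do, and one could alternatively derive $\varphi$ from the Hahn--Banach theorem together with the identification $(L^{p(\cdot)}(\rn))^{\ast}\simeq L^{p'(\cdot)}(\rn)$ that follows from $p^->1$ and $p^+<\infty$. Measurability of $\vec g$ on $\{\vec f\neq \vec 0\}$ is immediate since $\vec f/|\vec f|$ is a measurable unit vector field there, so no further technicality arises.
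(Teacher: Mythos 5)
This lemma is stated in the paper as a quotation of \cite[Proposition 3.2]{cp-1} and carries no proof of its own, so there is nothing internal to compare against; judged on its own, your argument is correct and is the standard (and almost certainly the intended) one: apply the scalar norm-conjugate formula for $L^{p(\cdot)}(\rn)$ (valid with a uniform constant since $1<p^-\le p^+<\infty$) to $h=|\vec f|$, then polarize by setting $\vec g=\varphi\,\vec f/|\vec f|$ on $\{\vec f\neq\vec 0\}$, which preserves the $L^{p'(\cdot)}$ bound and turns $\int|\vec f|\varphi$ into $\int\vec f\cdot\vec g$. Your choice $d=1$ also resolves the undefined constant $d$ in the statement in the natural way.
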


\section{Translation operators}\label{wbl-s-5}

Let $\tau_y$ denote the translation operator $\tau_y \vec{f}(x): =\vec{f}(x-y)$.

\begin{thm}\label{wbl-T2}
Let $p(\cdot) \in \cp(\rn)$ and $W \in \mathcal{A}_{p(\cdot)}$. Suppose that $\cf$ is a subset in $L^{p(\cdot)}(W)$. Then $\cf$  is precompact if

{\rm (i)} $\cf$ is bounded, i.e., $\sup_{\vec{f} \in \cf} \|\vec{f}\|_{L^{p(\cdot)}(W)} < \infty$.

{\rm (ii)} $\cf$ uniformly vanishes at infinity, that is,
\[\lim_{R \rightarrow \infty} \sup_{\vec{f} \in \cf} \|\vec{f}\chi_{B^c(0,R)}\|_{L^{p(\cdot)}(W)}=0.\]

{\rm (iii)} $\cf$ is equicontinuous, that is,
\[\lim_{r \rightarrow0^+} \sup_{\vec{f} \in \cf} \sup_{y \in B(0,r)} \|\tau_y \vec{f}-\vec{f}\|_{L^{p(\cdot)}(W)}=0\]

\end{thm}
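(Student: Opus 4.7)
The plan is to follow the classical Fr\'echet-Kolmogorov strategy adapted to this weighted vector-valued setting: reduce to a uniformly bounded family of continuous functions on a large ball and invoke the vector-valued Arzel\`a-Ascoli Lemma \ref{wbl-L1}. Fix $\epsilon>0$. Using (ii), I first choose $R>0$ so that $\|\vec{f}\chi_{B^c(0,R)}\|_{L^{p(\cdot)}(W)}<\epsilon/4$ for every $\vec{f}\in\cf$. Introduce the averaging operator
$$A_r\vec{f}(x):=\frac{1}{|B(0,r)|}\int_{B(0,r)}\tau_y\vec{f}(x)\,dy.$$
By Minkowski's integral inequality applied to $\|\cdot\|_{L^{p(\cdot)}(W)}$,
$$\|A_r\vec{f}-\vec{f}\|_{L^{p(\cdot)}(W)}\leq \sup_{y\in B(0,r)}\|\tau_y\vec{f}-\vec{f}\|_{L^{p(\cdot)}(W)},$$
so condition (iii) lets me fix $r\in(0,1)$ small enough that this quantity is $<\epsilon/4$ uniformly on $\cf$.

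Next, I show that $\cf_r:=\{A_r\vec{f}|_{\overline{B(0,R)}}:\vec{f}\in\cf\}$ is precompact in $C(\overline{B(0,R)},\rr^m)$. For pointwise boundedness I estimate
$$|A_r\vec{f}(x)|\leq \frac{1}{|B(0,r)|}\int_{B(x,r)}|W^{-1}(y)|_{\rm op}\,|W(y)\vec{f}(y)|\,dy$$
and apply the generalized H\"older inequality in $L^{p(\cdot)}$ to obtain a bound of the form $C(r)\,\||W^{-1}|_{\rm op}\chi_{B(x,r)}\|_{L^{p'(\cdot)}}\,\|\vec{f}\|_{L^{p(\cdot)}(W)}$, which is uniform in $x\in\overline{B(0,R)}$ and $\vec{f}\in\cf$ by (i). Equicontinuity follows from
$$|A_r\vec{f}(x)-A_r\vec{f}(x')|\leq \frac{1}{|B(0,r)|}\int_{B(x,r)\triangle B(x',r)}|\vec{f}(y)|\,dy,$$
again estimated uniformly via H\"older and (i), and tending to $0$ as $|x-x'|\to 0$.

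Finally, setting $v:=|W|_{\rm op}\in A_{p(\cdot)}$ (Lemma \ref{wbl-L9}), Remark \ref{wbl-R1} gives $v\chi_{B(0,R)}\in L^{p(\cdot)}(\rn)$, so the pointwise inequality
$$\|\vec{g}\chi_{B(0,R)}\|_{L^{p(\cdot)}(W)}\leq \|\vec{g}\|_\infty\,\|v\chi_{B(0,R)}\|_{L^{p(\cdot)}}$$
shows that $C(\overline{B(0,R)},\rr^m)$ embeds continuously into $L^{p(\cdot)}(W)$ after extension by zero. Applying Lemma \ref{wbl-L1} to $\cf_r$ and pushing the resulting finite $\eta$-net through this embedding yields a finite $\epsilon/4$-net for $\{(A_r\vec{f})\chi_{B(0,R)}:\vec{f}\in\cf\}$ in $L^{p(\cdot)}(W)$. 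A five-term triangle inequality splitting each $\vec{f}$ into its tail, its mollification error on $B(0,R)$, and the closest net element then produces a finite $\epsilon$-net for $\cf$, establishing total boundedness, hence precompactness by completeness. The main technical obstacle is the pointwise bound in the second step: one needs $|W^{-1}|_{\rm op}\chi_B\in L^{p'(\cdot)}$ for balls $B$, which I expect to extract from Definition \ref{wdf-1} by fixing a point $x_0$ where $W(x_0)$ is invertible and using $|W^{-1}(y)|_{\rm op}\leq |W^{-1}(x_0)|_{\rm op}\,|W(x_0)W^{-1}(y)|_{\rm op}$, or equivalently by invoking the reducing-operator machinery of \cite{cp-1}.
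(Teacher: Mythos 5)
Your argument is correct, but it takes a genuinely different route from the paper's own proof of this theorem. The paper discretizes: it covers $[-2^\beta,2^\beta]^n$ by finitely many dyadic cubes $Q_j$ of side length $2^t$, replaces $\vec f$ by the piecewise-constant average $\Phi(\vec f)=\sum_j \vec f_{Q_j}\chi_{Q_j}$, bounds $\|\vec f-\Phi(\vec f)\|_{L^{p(\cdot)}(W)}$ by the translation modulus $\sup_{y\in R_t}\|\tau_y\vec f-\vec f\|_{L^{p(\cdot)}(W)}$ via a Minkowski-type inequality, and then concludes because $\Phi(\cf)$ is a bounded subset of a finite-dimensional subspace of $L^{p(\cdot)}(W)$ --- no Arzel\`a--Ascoli and no H\"older duality are needed at that stage. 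You instead replace $\vec f$ by the continuous average $M_r\vec f$, prove the key bridge $\|M_r\vec f-\vec f\|_{L^{p(\cdot)}(W)}\leq\sup_{y\in B(0,r)}\|\tau_y\vec f-\vec f\|_{L^{p(\cdot)}(W)}$ (valid by Minkowski's integral inequality since $p^->1$), and then run the Arzel\`a--Ascoli argument on $\overline{B(0,R)}$ with the two embeddings furnished by $|W^{-1}|_{\rm op}\chi_B\in L^{p'(\cdot)}$ and $|W|_{\rm op}\chi_B\in L^{p(\cdot)}$; both facts are exactly what Lemma \ref{wbl-L9} combined with Remark \ref{wbl-R1} provides (applied to $W^{-1}\in\mathcal{A}_{p'(\cdot)}$ and to $W$ respectively), so the technical obstacle you flag at the end is already resolved by the paper's preliminaries and you need not fix a point $x_0$ or invoke reducing operators. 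In effect your proof shows that conditions (i)--(iii) of Theorem \ref{wbl-T2} imply the hypotheses of Theorem \ref{t41} and then reuses that theorem's sufficiency argument verbatim; this buys uniformity with Sections \ref{wbl-s-2} and \ref{wbl-s-3}, while the paper's finite-dimensional reduction is more elementary and self-contained. Both are complete proofs.
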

\begin{proof}
Assuming $\cf$ satisfies (i) - (iii).
We want to prove that $\cf$ is totally bounded.
We only need to find a finite $\epsilon$-set of $\cf$ for each fixed $\epsilon>0$  small enough.
Denote by $R_i:=[-2^i,2^i]^n$ for $i \in \zz$.
Then, by  (ii), there exists a sufficiently large positive integer $\beta$ such that
\[ \sup_{\vec{f} \in \cf} \|\vec{f} \chi_{R_\beta}\|_{L^{p(\cdot)}(W)}<\epsilon .\]
By  (iii), there exists an integer $t$ such that
\begin{equation}\label{wbl-16}
\sup_{\vec{f} \in \cf} \sup_{y \in R_t} \|\tau_y \vec{f} -\vec{f}\|_{L^{p(\cdot)}(W)}<\epsilon .
\end{equation}
For some constant $k \in \zn$, there  exists a sequence $\{Q_j\}^N_{j=1}$ of disjoint cubes in $\mathcal{D}_{-t}$ such that $R_\beta=\cup^N_{j=1}Q_j$, where $N=2^{(\beta+1-t)^n}$ and $t \in \zz$ .

For any $\vec{f} \in \cf$ and $x \in \rn$, define
\begin{equation*}
\Phi(\vec{f})(x):=
\begin{cases}
\vec{f}_{Q_j}:= \frac{1}{|Q_j|} \int_{Q_j} \vec{f}(y) dy,  &x \in Q_j, \, j=1,2,\ldots,N,\\
\vec{0},& \text{otherwise.}
\end{cases}
\end{equation*}
If $x \in Q_j$, then $x-Q_j:=\{x-y: y \in Q_j\} \subset R_t$. Thus, we have
\[ \int_{Q_j} |\vec{f}(y)| dy = \int_{x-Q_j} |\vec{f}(x-y)| dy \leq \int_{R_t} |\vec{f}(x-y)| dy.\]
Therefore, we obtain
\begin{align*}
|W(x) (f(x) -f_{Q_j}) \chi_{Q_j}(x)|
& = \bigg| \frac{1}{|Q_j|} \int_{Q_j} W(x) (f(x)-f(z)) dz \chi_{Q_j}(x) \bigg| \\
&\leq \frac{1}{|Q_j|} \int_{Q_j} \big|W(x) (f(x)-f(z)) \big| dz \chi_{Q_j}(x) \\
& \leq \frac{1}{|Q_j|} \int_{R_t}  \big|W(x) (f(x)-f(x-y)) \big| dy \chi_{Q_j}(x).
\end{align*}
It follows that
\begin{align*}
\bigg| \sum_{Q_j \subset R_\beta} (W(x)(f(x) -f_{Q_j})) \chi_{Q_j}(x) \bigg|
&\leq  \sum_{Q_j \subset R_\beta} |W(x) (f(x) -f_{Q_j}) \chi_{Q_j}(x)| \\
&\leq \sum_{Q_j \subset R_\beta} \frac{1}{|Q_j|} \int_{R_t}  \big|W(x) (f(x)-f(x-y)) \big| dy \chi_{Q_j}(x) \\
&=  2^{-nt} \int_{R_t} \big|W(x) (f(x)-f(x-y)) \big| dy \chi_{R_m}(x).
\end{align*}
Therefore, we have
\begin{align*}
\|W(f\chi_{R_\beta}-\Phi(f))\|_{L^{p(\cdot)}}
&= \bigg\| \sum_{Q_j \subset R_\beta}W(f-f_{Q_j}) \chi_{Q_j} \bigg\|_{L^{p(\cdot)}} \\
& \leq 2^{-nt} \bigg\| \int_{R_t} \big|W(x) (f(x)-f(x-y)) \big| dy \chi_{R_\beta}(x) \bigg\|_{L^{p(\cdot)}} \\
& \leq 2^{-nt} \int_{R_t} \|W(\tau_y \vec{f} -\vec{f})\|_{L^{p(\cdot)}} dy\\
& \leq 2^n \sup_{y \in R_t} \|\tau_y \vec{f} -\vec{f}\|_{L^{p(\cdot)}(W)} \\
& <2^n \epsilon,
\end{align*}
where $R_\beta=\cup^N_{j=1}Q_j$, the second inequality and the last inequality are obtained by  the Minkowski-type inequality and (\ref{wbl-16}), respectively. Thus,
\begin{align*}
\|\vec{f}-\Phi(\vec{f})\|_{L^{p(\cdot)}(W)}
&\leq \|(\vec{f}-\Phi(\vec{f})) \chi_{R_\beta}\|_{L^{p(\cdot)}(W)} + \|(\vec{f}-\Phi(\vec{f})) \chi_{R^c_\beta}\|_{L^{p(\cdot)}(W)} \\
&\leq \|(\vec{f}-\Phi(\vec{f})) \chi_{R_\beta}\|_{L^{p(\cdot)}(W)} + \|\vec{f} \chi_{R^c_\beta}\|_{L^{p(\cdot)}(W)} \\
&\les \epsilon.
\end{align*}
It suffices to show that $\Phi(\cf)$ is totally bounded in  $L^{p(\cdot)}(W)$.
By  the above estimates and (i), we have
\[ \sup_{\vec{f} \in \cf} \|\Phi(\vec{f})\|_{L^{p(\cdot)}(W)} \leq \sup_{\vec{f} \in \cf} \|\vec{f}-\Phi(\vec{f})\|_{L^{p(\cdot)}(W)} + \sup_{\vec{f} \in \cf} \|\vec{f}\|_{L^{p(\cdot)}(W)} < \infty.\]
Therefore, for any $\vec{f} \in \cf$,
\[ |W(x) \Phi(\vec{f})(x)| \leq \infty \quad \text{a.e.} \quad x \in \rn.  \]
$W(x)$ is a positive definite matrix, we obtain
\[ |\Phi(\vec{f})(x)| \leq \infty \quad \text{a.e.} \quad x \in \rn,  \]
which implies $|f_{Q_j}|< \infty$, $j=1,2,\ldots,N$. Furthermore,  the entries of $W$ are all locally integrable, we know that $\Phi$ is a mapping from $\cf$ to $\mathcal{B}$, a finite dimensional Banach subspace $L^{p(\cdot)}(W)$.
Note that $\Phi(f)\subset \mathcal{B}$ is bounded and therefore totally bounded.
The proof of Theorem \ref{wbl-T2} is complete.
\end{proof}

\section{Average operators}\label{wbl-s-2}
For any $r \in (0,\infty)$, let $M_r$ denote the average operator
\[M_r \vec{f}(x)=\frac{1}{|B(x,r)|} \int_{B(x,r)} \vec{f}(y) dy, \quad \forall \vec{f} \in L^{p(\cdot)}(W).\]
The boundedness of the average operator on $L^{p(\cdot)}(W)$ has been given in {\cite[Theorems 4.1 and 4.2]{cp-1}} for cubes.
It is easy to see that Theorems 4.1 and 4.2 in \cite{cp-1} hold also for balls.
Thus from Theorems 4.1  in \cite{cp-1}, we have following lemma.
\begin{lem}\label{wbl-L5}
Let $p(\cdot) \in \cp(\rn)$. If $W \in \mathcal{A}_{p(\cdot)}$, then  there exists  a constant $C>0$ such that
\[\|M_r\vec{f}\|_{L^{p(\cdot)}(W)} \leq C[W]_{\mathcal{A}_{p(\cdot)}}  \|\vec{f}\|_{L^{p(\cdot)}(W)}.\]
\end{lem}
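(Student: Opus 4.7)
My plan is to deduce the ball-averaging bound directly from the cube-averaging bound already established in \cite[Theorem 4.1]{cp-1} via a purely geometric comparison. The argument has only two substantive steps.

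Step one is a pointwise reduction. For fixed $x\in\rn$ and $r>0$, the ball $B(x,r)$ is contained in the axis-parallel cube $Q:=Q(x,2r)$ centered at $x$ with side length $2r$, and the volume ratio $|Q|/|B(x,r)|$ is a purely dimensional constant $C_n$. Since the matrix $W(x)$ does not depend on the integration variable, it can be pulled inside the integral; applying the triangle inequality for vectors in $\rr^m$ and then enlarging the domain to $Q$ gives
\[|W(x)M_r\vec{f}(x)| \leq \frac{1}{|B(x,r)|}\int_{B(x,r)}|W(x)\vec{f}(y)|\,dy \leq C_n\cdot \frac{1}{|Q|}\int_Q |W(x)\vec{f}(y)|\,dy.\]
The right-hand side is precisely the cube-averaging analogue of $M_r$ with the matrix weight action at the evaluation point intact.

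Step two is to take the $L^{p(\cdot)}$ quasi-norm of both sides and invoke \cite[Theorem 4.1]{cp-1}. That theorem furnishes exactly the required inequality for the cube-averaging operator, so we immediately conclude $\|M_r\vec{f}\|_{L^{p(\cdot)}(W)} \lesssim C_n [W]_{\mathcal{A}_{p(\cdot)}} \|\vec{f}\|_{L^{p(\cdot)}(W)}$, absorbing $C_n$ into the implicit constant.

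The only point requiring care, and the main obstacle to a genuinely self-contained presentation, is that Definition \ref{wdf-1} uses cubes: one must verify that the $\mathcal{A}_{p(\cdot)}$-characteristic is comparable (up to dimensional constants) when computed over balls, so that the cube-version theorem truly applies with the constant $[W]_{\mathcal{A}_{p(\cdot)}}$ appearing in the statement rather than some a priori distinct ball-based quantity. This comparability is a routine variant of the classical $A_p$ equivalence, resting on the same ball-versus-cube inclusions used in Step one, and so introduces no new analysis; this is essentially the content of the authors' remark that \cite[Theorems 4.1 and 4.2]{cp-1} hold also for balls.
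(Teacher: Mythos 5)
Your Step one is fine, but Step two has a real gap, and it is specific to the matrix-weighted setting. After you apply the triangle inequality and enlarge $B(x,r)$ to $Q:=Q(x,2r)$, the quantity you are left with is
\[
C_n\,\frac{1}{|Q|}\int_Q |W(x)\vec{f}(y)|\,dy ,
\]
with the absolute value \emph{inside} the integral. This is not $C_n\,|W(x)A_{2r}\vec{f}(x)|$, where $A_{2r}\vec{f}(x)=\frac{1}{|Q(x,2r)|}\int_{Q(x,2r)}\vec{f}(y)\,dy$ is the genuine cube-averaging analogue of $M_r$; by the very triangle inequality you just used, it \emph{dominates} that quantity. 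The cube theorem you want to cite controls $\||W(\cdot)A_{2r}\vec{f}(\cdot)|\|_{L^{p(\cdot)}}$, i.e.\ the smaller of the two expressions, so taking $L^{p(\cdot)}$ norms in your pointwise bound and invoking that theorem proves nothing: the inequalities point the wrong way. In the scalar case this issue is invisible (one may assume $f\ge 0$, or the weight factors out of the modulus); for matrix weights it is precisely the crux, since $|W(x)\vec{v}|$ is not a constant multiple of $|\vec{v}|$.

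The gap is fixable, but not by a black-box citation of the \emph{statement} of \cite[Theorem 4.1]{cp-1}: you have to go into its proof. The first step there is exactly to insert $W^{-1}(y)W(y)$ and estimate $\frac{1}{|Q|}\int_Q|W(x)\vec{f}(y)|\,dy\le \frac{1}{|Q|}\int_Q|W(x)W^{-1}(y)|_{\rm op}\,|W(y)\vec{f}(y)|\,dy$, after which the generalized H\"older inequality and the definition of $[W]_{\mathcal{A}_{p(\cdot)}}$ take over; that is, the proof (though not the stated conclusion) establishes the stronger bound with the absolute value inside the integral, and that stronger form is what your reduction actually requires. This is also, implicitly, all the paper does: the authors give no argument beyond the remark that the cube proofs of \cite[Theorems 4.1 and 4.2]{cp-1} run verbatim with balls in place of cubes. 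Your closing remark about comparing the cube-based and ball-based $\mathcal{A}_{p(\cdot)}$ characteristics is a legitimate and indeed routine point, but it is the lesser of the two things that need checking; the triangle-inequality direction is the one that can sink the argument as written.
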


\begin{lem}\label{wbl-L6}
Let $p(\cdot) \in \cp(\rn)$. If $W \in \mathcal{A}_{p(\cdot)}$, then  for any $\vec{f} \in L^{p(\cdot)}(W)$
\begin{equation}\label{wbl-2}
 \lim_{r\rightarrow 0} |M_r \vec{f}(x) -\vec{f}(x)|=0 \quad a.e. \ x\in \rn  .
\end{equation}
Moreover, we have that for any $f \in L^{p(\cdot)}(W)$
\[ \lim_{r\rightarrow 0^+} \|M_r\vec{f}-\vec{f}\|_{L^{p(\cdot)}(W)}=0.\]
\end{lem}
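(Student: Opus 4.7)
My plan is to prove the pointwise statement (\ref{wbl-2}) by reducing it to the classical Lebesgue differentiation theorem applied componentwise, and then upgrade to norm convergence via a three-$\epsilon$ argument built on the boundedness of $M_r$ (Lemma~\ref{wbl-L5}) and the density of test functions (Lemma~\ref{wbl-L8}).

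For the pointwise part, the task reduces to showing that each component of $\vec f$ lies in $L^1_{\rm loc}(\rn)$. Since $|\vec f(x)|\le |W^{-1}(x)|_{\rm op}\,|W(x)\vec f(x)|$, H\"{o}lder's inequality in variable Lebesgue spaces gives
\[\int_Q |\vec f(x)|\,dx \lesssim \||W\vec f|\chi_Q\|_{L^{p(\cdot)}}\,\||W^{-1}|_{\rm op}\chi_Q\|_{L^{p'(\cdot)}}\]
for every cube $Q$. The first factor is finite because $\vec f\in L^{p(\cdot)}(W)$; for the second, Definition~\ref{wdf-1} guarantees that $x\mapsto \||W(x)W^{-1}(\cdot)|_{\rm op}\chi_Q\|_{L^{p'(\cdot)}}$ is finite for a.e.\ $x\in Q$, and picking any such $x_0$ and invoking $|W^{-1}(y)|_{\rm op}\le |W(x_0)^{-1}|_{\rm op}\,|W(x_0)W^{-1}(y)|_{\rm op}$ shows $\||W^{-1}|_{\rm op}\chi_Q\|_{L^{p'(\cdot)}}<\infty$. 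Componentwise Lebesgue differentiation then delivers $M_r\vec f(x)\to\vec f(x)$ a.e.

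For the norm convergence, fix $\epsilon>0$ and use Lemma~\ref{wbl-L8} to pick $\vec g\in C_c^\infty(\rn)$ with $\|\vec f-\vec g\|_{L^{p(\cdot)}(W)}<\epsilon$. The triangle inequality gives
\[\|M_r\vec f-\vec f\|_{L^{p(\cdot)}(W)}\le \|M_r(\vec f-\vec g)\|_{L^{p(\cdot)}(W)}+\|M_r\vec g-\vec g\|_{L^{p(\cdot)}(W)}+\|\vec f-\vec g\|_{L^{p(\cdot)}(W)},\]
and Lemma~\ref{wbl-L5} dominates the first summand by $C[W]_{\mathcal{A}_{p(\cdot)}}\epsilon$. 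To handle the middle summand, fix $R>0$ with $\supp\vec g\subset B(0,R)$; for $r<1$ the support of $M_r\vec g-\vec g$ sits inside $B(0,R+1)$, while uniform continuity of $\vec g$ forces $\|M_r\vec g-\vec g\|_{L^\infty}\to 0$ as $r\to 0^+$. Since $|W|_{\rm op}\in A_{p(\cdot)}\subset L^{p(\cdot)}_{\rm loc}$ by Lemma~\ref{wbl-L9} and Remark~\ref{wbl-R1}, I conclude
\[\|M_r\vec g-\vec g\|_{L^{p(\cdot)}(W)}\le \|M_r\vec g-\vec g\|_{L^\infty}\,\||W|_{\rm op}\chi_{B(0,R+1)}\|_{L^{p(\cdot)}}\longrightarrow 0.\]

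The main obstacle is essentially bookkeeping: ensuring that the matrix-weight framework permits the usual scalar manipulations. The delicate points are the local integrability of $|W^{-1}|_{\rm op}$ on cubes (used to pass to componentwise Lebesgue differentiation) and the local $L^{p(\cdot)}$ integrability of $|W|_{\rm op}$ (used to translate uniform convergence of $M_r\vec g$ into norm convergence). Lemma~\ref{wbl-L9} is the crucial bridge that lets one import the scalar $A_{p(\cdot)}$ theory when treating the smooth approximant $\vec g$.
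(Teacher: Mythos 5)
Your proof is correct and follows essentially the same route as the paper: local integrability of $\vec f$ via H\"{o}lder's inequality against $|W^{-1}|_{\rm op}$ followed by componentwise Lebesgue differentiation, then density of $C^\infty_c$ plus the uniform bound of Lemma~\ref{wbl-L5} in a three-term triangle inequality for the norm convergence. The only divergence is your handling of the middle term $\|M_r\vec g-\vec g\|_{L^{p(\cdot)}(W)}$ via uniform continuity, compact support, and the local $L^{p(\cdot)}$-integrability of $|W|_{\rm op}$, which is in fact more carefully justified than the paper's corresponding step (the paper's inequality $\|M_r\vec g-\vec g\|_{L^{p(\cdot)}(W)}\le C[W]_{\mathcal{A}_{p(\cdot)}}\|M_r\vec g-\vec g\|_{L^{p(\cdot)}}$ does not follow from Lemma~\ref{wbl-L5} as cited).
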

\begin{proof}
Let $\vec{f}=(f_1,f_2,\ldots,f_m)^{\rm T} \in L^{p(\cdot)}(W)$. Since $W \in \mathcal{A}_{p(\cdot)}$
\[ |\vec{f}(x)|^{p(x)} = |W(x)^{-1}W(x)\vec{f}(x)|^{p(x)} \leq |W^{-1}(x)|_{\rm op}^{p(x)} |W(x)\vec{f}(x)|^{p(x)}.\]
So then
\[ |\vec{f}(x)|^{p(x)} |W^{-1}(x)|_{\rm op}^{-p(x)} \leq  |W(x)\vec{f}(x)|^{p(x)} .\]
By Lemma \ref{wbl-L9}, $|W^{-1}|_{\rm op} \in A_{p'(\cdot)}$.
Thus $|\vec{f}| \cdot |W^{-1}|_{\rm op} \in L^{p(\cdot)}(\rn)$.
Therefore for each cube $Q$, by H\"{o}lder's inequality we have
\[\|\vec{f}\|_{L(Q)} \leq C \| |\vec{f}| \cdot |W^{-1}|^{-1}_{\rm op} \|_{L^{p(\cdot)}(Q)} \||W^{-1}|_{\rm op}\|_{L^{p'(\cdot)}(Q)}.\]
This means that $|\vec{f}| \in L^1_{\rm loc}(\rn)$. Hence $f_i \in L^1_{\rm loc}(\rn)$ for  every $1\leq i \leq m$.
Now, by the  Lebesgue differentiation theorem, for every $1\leq i \leq m$, we have
\begin{equation}\label{wbl-3}
\lim_{r\rightarrow 0} |M_r f_i(x) -f_i(x)|=0 \quad a.e. \ x\in \rn.
\end{equation}
Furthermore,
\begin{equation}\label{wbl-4}
 |M_r \vec{f}(x) -\vec{f}(x)| \leq  m^{1/2} \max_{1\leq i \leq m} |M_r f_i(x) -f_i(x)| .
\end{equation}
Therefore, from (\ref{wbl-3}) and (\ref{wbl-4}), we obtain (\ref{wbl-2}).

Let $\epsilon>0$. Then by Lemma \ref{wbl-L8}, we can find a function $\vec{g} \in C^\infty_c(\rn)$ such that $\|\vec{f}-\vec{g}\|_{L^{p(\cdot)}(W)} <\epsilon$.
Since $W \in \mathcal{A}_{p(\cdot)}$, then by Lemma \ref{wbl-L5} and (\ref{wbl-2}), for sufficiently small $r$
\[ \|M_r \vec{g} -\vec{g}\|_{L^{p(\cdot)}(W)}  \leq C[W]_{\mathcal{A}_{p(\cdot)}} \|  M_r \vec{g} -\vec{g}\|_{L^{p(\cdot)}} \leq \epsilon.\]
Again by Lemma \ref{wbl-L5}, we have for sufficiently small $r$
\begin{align*}
& \|M_r \vec{f} -\vec{f}\|_{L^{p(\cdot)}(W)}\\
& \quad \leq \|M_r\vec{f} -M_r\vec{g}\|_{L^{p(\cdot)}(W)} + \|M_r \vec{g} -\vec{g}\|_{L^{p(\cdot)}(W)} + \|\vec{g} -\vec{f}\|_{L^{p(\cdot)}(W)}\\
&\quad \leq  \|\vec{g} -\vec{f}\|_{L^{p(\cdot)}(W)} + \|M_r \vec{g} -\vec{g}\|_{L^{p(\cdot)}(W)} \\
& \quad \leq 2\epsilon.
\end{align*}
This completes the proof.
\end{proof}

\begin{thm}\label{t41}
Let $p(\cdot)\in \cp(\rn)$. Suppose that $\cf$ is a subset in $L^{p(\cdot)}(W)$.  Then $\cf$ is a precompact  if and only if

{\rm (i)} $\cf$ is bounded, i. e., $\sup_{\vec{f} \in \cf} \|\vec{f}\|_{L^{p(\cdot)}(W)} < \infty$.

{\rm (ii)} $\cf$ is equicontinuous, that is,
\[\lim_{r \rightarrow0^+} \sup_{\vec{f} \in \cf} \|M_r \vec{f}-\vec{f}\|_{L^{p(\cdot)}(W)}=0.\]

{\rm (iii)} $\cf$ uniformly vanishes at infinity, that is,
\[\lim_{R \rightarrow \infty} \sup_{\vec{f} \in \cf} \|\vec{f}\chi_{B^c(0,R)}\|_{L^{p(\cdot)}(W)}=0.\]

\end{thm}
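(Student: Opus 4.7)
My plan is to handle the two directions of the equivalence separately. Necessity will follow from routine $\epsilon$-net arguments together with Lemmas \ref{wbl-L5} and \ref{wbl-L6}. The substantive content lies in the sufficiency direction, which I would prove by a truncation-plus-smoothing reduction to a vector-valued Ascoli-type argument, rather than trying to route through Theorem \ref{wbl-T2}; indeed, translation is not an isometry on $L^{p(\cdot)}(W)$, so extracting translation equicontinuity from $M_r$-equicontinuity would itself be nontrivial.

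For necessity, precompactness implies total boundedness and hence (i). For (ii), fix $\epsilon>0$ and a finite $\epsilon$-net $\{\vec{f}_i\}_{i=1}^N\subset\cf$. By Lemma \ref{wbl-L6} there is a common $r_0>0$ with $\|M_r\vec{f}_i-\vec{f}_i\|_{L^{p(\cdot)}(W)}<\epsilon$ for every $i$ and every $r<r_0$; for an arbitrary $\vec{f}\in\cf$ I would pick $i$ with $\|\vec{f}-\vec{f}_i\|_{L^{p(\cdot)}(W)}<\epsilon$ and estimate
\[
\|M_r\vec{f}-\vec{f}\|_{L^{p(\cdot)}(W)}\leq \|M_r(\vec{f}-\vec{f}_i)\|_{L^{p(\cdot)}(W)}+\|M_r\vec{f}_i-\vec{f}_i\|_{L^{p(\cdot)}(W)}+\|\vec{f}_i-\vec{f}\|_{L^{p(\cdot)}(W)},
\]
bounding the first term uniformly via Lemma \ref{wbl-L5}. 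Condition (iii) is analogous: each $\|\vec{f}_i\chi_{B^c(0,R)}\|_{L^{p(\cdot)}(W)}\to 0$ as $R\to\infty$ by absolute continuity of the $L^{p(\cdot)}$-norm (which holds because $p^+<\infty$), and the $\epsilon$-net argument upgrades this to uniformity over $\cf$.

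For sufficiency, given $\epsilon>0$ I would first use (iii) to pick $R>0$ so that $\sup_{\vec{f}\in\cf}\|\vec{f}\chi_{B^c(0,R)}\|_{L^{p(\cdot)}(W)}<\epsilon$, then use (ii) to pick $r>0$ so that $\sup_{\vec{f}\in\cf}\|M_r\vec{f}-\vec{f}\|_{L^{p(\cdot)}(W)}<\epsilon$. It then suffices to show that $\mathcal{G}:=\{(M_r\vec{f})\chi_{\overline{B(0,R)}}:\vec{f}\in\cf\}$ is totally bounded in $L^{p(\cdot)}(W)$. I would prove this by regarding $\{M_r\vec{f}|_{\overline{B(0,R)}}:\vec{f}\in\cf\}$ as a subset of $C(\overline{B(0,R)},\rr^m)$ and applying Lemma \ref{wbl-L1}. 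The pointwise bound
\[
|M_r\vec{f}(x)|\leq \frac{1}{|B(x,r)|}\int_{B(x,r)}|W^{-1}(y)|_{\rm op}\,|W(y)\vec{f}(y)|\,dy\leq \frac{C}{|B(x,r)|}\bigl\||W^{-1}|_{\rm op}\chi_{B(x,r)}\bigr\|_{L^{p'(\cdot)}}\|\vec{f}\|_{L^{p(\cdot)}(W)}
\]
is finite and locally uniform in $x$, since $|W^{-1}|_{\rm op}\in L^{p'(\cdot)}_{\rm loc}$ by Lemma \ref{wbl-L9} and Remark \ref{wbl-R1} (exactly as used in the proof of Lemma \ref{wbl-L6}). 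Uniform equicontinuity in $x$ on $\overline{B(0,R)}$ follows from
\[
|M_r\vec{f}(x)-M_r\vec{f}(x')|\leq \frac{1}{|B(x,r)|}\int_{B(x,r)\triangle B(x',r)}|\vec{f}(y)|\,dy
\]
together with the same H\"older-type estimate and the elementary fact $|B(x,r)\triangle B(x',r)|\lesssim r^{n-1}|x-x'|$. To transfer precompactness from $C(\overline{B(0,R)},\rr^m)$ back to $L^{p(\cdot)}(W)$, I would use
\[
\|h\chi_{\overline{B(0,R)}}\|_{L^{p(\cdot)}(W)}\leq \|h\|_{L^\infty(\overline{B(0,R)})}\bigl\||W|_{\rm op}\chi_{\overline{B(0,R)}}\bigr\|_{L^{p(\cdot)}},
\]
whose right-hand factor is finite since $|W|_{\rm op}\in L^{p(\cdot)}_{\rm loc}$ by Lemma \ref{wbl-L9} and Remark \ref{wbl-R1}.

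The main obstacle I expect is in the equicontinuity step: showing that $\bigl\||W^{-1}|_{\rm op}\chi_{B(x,r)\triangle B(x',r)}\bigr\|_{L^{p'(\cdot)}}\to 0$ as $|x-x'|\to 0$ \emph{uniformly} over $x,x'\in\overline{B(0,R)}$. Since the function $|W^{-1}|_{\rm op}^{p'(\cdot)}$ is locally integrable and the symmetric-difference sets lie inside the fixed compact region $\overline{B(0,R+r)}$ with Lebesgue measure tending to zero uniformly in $x,x'$, I would derive this uniform absolute continuity of the variable-exponent norm by a compactness or covering argument on $\overline{B(0,R+r)}$, and that is where most of the technical care will be needed.
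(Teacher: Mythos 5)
Your proposal follows essentially the same route as the paper's own proof: necessity by $\epsilon$-net arguments using Lemmas \ref{wbl-L5} and \ref{wbl-L6}, and sufficiency by reducing (via (ii) and (iii)) to showing that $M_r\cf$ restricted to $\bar{B}(0,R)$ is precompact in $C(\bar{B}(0,R),\rr^m)$ through the Ascoli--Arzel\`a theorem, then transferring back to $L^{p(\cdot)}(W)$ with the factor $\||W|_{\rm op}\chi_{B(0,R)}\|_{L^{p(\cdot)}}$. The only notable difference is that you explicitly flag, and propose to justify, the \emph{uniform} vanishing of $\||W^{-1}|_{\rm op}\chi_{B(x,r)\triangle B(y,r)}\|_{L^{p'(\cdot)}}$ as $|x-y|\to 0$ (a point the paper merely asserts), and your direct H\"older bound on the symmetric-difference integral is slightly cleaner than the paper's detour through $M_r\vec{f}$.
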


\begin{proof}
Assuming $\cf$ satisfies (i) - (iii).
By condition (ii), we only need to prove that $M_r \cf$ is precompact for sufficiently  small $r$.
Therefore, we first prove that $M_r \cf$ is a precompact set in $C(\bar{B}(0,R), \rr^m)$ for every $R>0$.
By H\"{o}lder's inequality, we have
\begin{align}\label{wbl-9}
\int_{B(0,2R)} |\vec{f}(y)| dy
&=   \int_{B(0,2R)} |W(y) W(y)^{-1}\vec{f}(y)| dy \no\\
&\leq \|\vec{f}\chi_{B(0,2R)}\|_{L^{p(\cdot)}(W)} \||W^{-1}|_{\rm op} \chi_{B(0,2R)}\|_{L^{p'(\cdot)}}.
\end{align}
Thus, for any $\vec{f} \in \cf$ and any fixed $x \in \bar{B}(0,R)$, we have
\begin{align}\label{wbl-1}
|M_r\vec{f}(x)|
&\les \frac{1}{|B(x,r)|} \int_{B(x,r)} |\vec{f}(y)|dy \nonumber\\
&\les \frac{1}{|B(x,r)|} \int_{B(0,2R)} |\vec{f}(y)|dy \nonumber\\
&\les \frac{1}{|B(0,r)|} \|\vec{f}\chi_{B(0,2R)}\|_{L^{p(\cdot)}(W)} \||W^{-1}|_{\rm op} \chi_{B(0,2R)}\|_{L^{p'(\cdot)}},
\end{align}
where we used the fact that $B(x,r) \subset B(0,2R)$ when $x \in \bar{B}(0,R)$, and $r \in (0,R)$.
Since $W \in \mathcal{A}_{p(\cdot)}$,  $W^{-1} \in \mathcal{A}_{p'(\cdot)}$.
Therefore, by Lemma \ref{wbl-L9}, $|W^{-1}|_{\rm op} \in A_{p'(\cdot)}$. Thus, $\||W^{-1}|_{\rm op} \chi_{B(0,2R)}\|_{L^{p'(\cdot)}}< \infty$.
Thus, $M_r\cf$ is uniformly bounded. Next, we show $M_r \cf$ is equicontinuous. For any fixed $x$, $y \in \bar{B}(0,R)$, by (\ref{wbl-9}) we have
\begin{align}\label{wbl-8}
|M_r\vec{f}(y)-M_r\vec{f}(x)|
 &= \bigg| \frac{1}{|B(x,r)|} \int_{B(y,r)} \vec{f}(z) dz - \frac{1}{|B(x,r)|} \int_{B(x,r)} \vec{f}(z) dz \bigg| \nonumber\\
&   \les \frac{1}{|B(x,r)|} \int_{B(x,r) \Delta B(y,r)} |\vec{f}(z)| dz.
\end{align}
For any $x,y \in \bar{B}(0,R)$ and $\vec{f} \in \cf$, by (\ref{wbl-9}) and (\ref{wbl-1}), we obtain
\begin{align}\label{wbl-10}
&\int_{B(x,r) \Delta B(y,r)} |\vec{f}(z)| dz \no\\
& \quad \les  \int_{B(x,r) \Delta B(y,r)} |\vec{f}(z)-M_r\vec{f}(z)| dz + \int_{B(x,r) \Delta B(y,r)} |M_r\vec{f}(z)| dz \no\\
& \quad \les \||W^{-1}|_{\rm op} \chi_{B(x,r) \Delta B(y,r)}\|_{L^{p'(\cdot)}} \|M_r\vec{f}-\vec{f}\|_{L^{p(\cdot)}(W)} \no\\
&\quad \quad +  \||W^{-1}|_{\rm op} \chi_{B(x,r) \Delta B(y,r)}\|_{L^{p'(\cdot)}} \|\vec{f}\|_{L^{p(\cdot)}(W)},
\end{align}
Because  $|B(x,r) \Delta B(y,r)|$ converges to $0$ as $x$ converges to $y$, then  $\||W^{-1}|_{\rm op} \chi_{B(x,r) \Delta B(y,r)}\|_{L^{p'(\cdot)}}$ $\rightarrow 0$ as $x$ converges to $y$. Therefore by (\ref{wbl-8}) and (\ref{wbl-10}), we obtain $M_r\cf$ is equicontinuous on $\bar{B}(0,R)$.
According to condition (i), for each $x \in \bar{B}(0,R)$, $\{M_r\vec{f} (x): \vec{f} \in \cf\}$ is a precompact set in $\rn$. Hence by Lemma \ref{wbl-L1}, $M_r\cf$ is precompact in $C(\bar{B}(0,R), \rr^m)$.

Finally, we verify that $M_r\cf$ has finite $\epsilon$-net for each $\epsilon \in  (0,1)$.
By condition (iii), there exists  $R > 0$ such that
\begin{equation}\label{wbl-6}
 \sup_{\vec{f} \in \cf} \|\vec{f} \chi_{B^c(0,R)}\|_{L^{p(\cdot)}(W)}<\epsilon/8.
\end{equation}
By condition (ii), we choose $r$ enough small  such that
\begin{equation}\label{wbl-7}
 \sup_{\vec{f} \in \cf} \|M_r\vec{f}-\vec{f}\|_{L^{p(\cdot)}(W)}<\epsilon / 8.
\end{equation}
By the fact that $M_r\cf$ is precompact in $C(\bar{B}(0,R), \rr^m)$, which was proved above, there exists $\{\vec{f}_1, \vec{f}_2, \ldots , \vec{f}_k\} \subset \cf$
such that $\{M_r\vec{f}_1, M_h\vec{f}_2,\ldots, M_h\vec{f}_k\}$
is a finite $\epsilon/ \gamma$-net of $M_r\cf$
in the norm of $C(\bar{B}(0,R), \rr^m)$, where $\gamma= 2 \| |W|_{\rm op} \chi_{B(0,R)}\|_{L^{p(\cdot)}}$.
i.e. for each $\vec{f} \in \cf$, we choose $\vec{f}_j$, $j\in \{1,2,\ldots,k\}$ such that
 \begin{equation}\label{wbl-11}
 \sup_{y \in B(0,R)} |M_r \vec{f}(y) -M_r\vec{f}_j(y)|<\epsilon/ \gamma .
 \end{equation}
Below we verify that $\{M_r\vec{f}_1,$ $M_r\vec{f}_2,\ldots, M_r\vec{f}_k\}$ is a finite $\epsilon$-net of $M_r \cf$ in the norm of $L^{p(\cdot)}(W)$.
By (\ref{wbl-11}), we obtain
\begin{align}\label{wbl-12}
 \|(M_r\vec{f} - M_r \vec{f}_j)\chi_{B(0,R)}\|_{L^{p(\cdot)}(W)}
& \leq  \| |W|_{\rm op} |(M_r \vec{f} - M_r \vec{f}_j)\chi_{B(0,R)}|\|_{L^{p(\cdot)}} \no\\
&\leq \| |W|_{\rm op} \chi_{B(0,R)}\|_{L^{p(\cdot)}}  \sup_{x \in B(0,R)} |M_r \vec{f}(x) -M_r\vec{f}_j(x)| \no\\
&\leq \epsilon/2.
\end{align}
Then, by  (\ref{wbl-6}), (\ref{wbl-7}) and (\ref{wbl-12})
\begin{align*}
&\|M_r \vec{f} - M_r \vec{f}_j\|_{L^{p(\cdot)}(W)} \\
&\quad \leq \|(M_r\vec{f} - M_r \vec{f}_j)\chi_{B(0,R)}\|_{L^{p(\cdot)}(W)} + \|(M_r \vec{f} - M_r \vec{f}_j)\chi_{B^c(0,R)}\|_{L^{p(\cdot)}(W)}\\
&\quad \leq \epsilon/2 + \|M_r\vec{f}-\vec{f}\|_{L^{p(\cdot)}(W)}+ \|\vec{f}_j-M_r\vec{f}_j\|_{L^{p(\cdot)}(W)}\\
& \quad \quad + \|\vec{f}\chi_{B^c(0,R)}\|_{L^{p(\cdot)}(W)} + \|\vec{f}_j\chi_{B^c(0,R)}\|_{L^{p(\cdot)}(W)} \\
&\quad \leq  \epsilon.
\end{align*}

Conversely, suppose that $\cf$ is a precompact set in $L^{p(\cdot)}(W)$. Then $\cf$ is a bounded set.
Now for any given $\epsilon>0$,  let $\{\vec{f}_1,\vec{f}_2,...,\vec{f}_N\}$ be an $\epsilon$-net of $\cf$.

 For   each $1 \leq k \leq N$, since $\vec{f}_k\in L^{p(\cdot)}(W)$,  by the monotone convergence theorem, there exists $H_k>0$ such that
\[ \|\vec{f}_k\chi_{B^c(0,R)}\|_{L^{p(\cdot)}(W)} < \epsilon/2 \quad \text{for } R>H_k.\]
Set $H=\max\{R_k: 1 \leq k \leq N\}$, then
\[ \|\vec{f}\chi_{B^c(0,R)}\|_{L^{p(\cdot)}(W)} \leq  \|(\vec{f}-\vec{f}_k)\chi_{B^c(0,R)}\|_{L^{p(\cdot)}(W)} + \|\vec{f}_k\chi_{B^c(0,R)}\|_{L^{p(\cdot)}(W)} < \epsilon \]
for $R>H$. Thus, (iii) holds.

 By Lemma \ref{wbl-L6}, there exists $r>0$ such that for $h< r$,
\[ \max_{1 \leq k \leq N} \|M_h\vec{f}_k - \vec{f}_k \|_{L^{p(\cdot)}(W)} < \epsilon. \]
Then by Lemma \ref{wbl-L5},
 \begin{align*}
&\|M_h\vec{f}-\vec{f}\|_{L^{p(\cdot)}(W)} \\
&\quad \leq  \|M_h(\vec{f}-\vec{f}_k)\|_{L^{p(\cdot)}(W)} + \|\vec{f}-\vec{f}_k\|_{L^{p(\cdot)}(W)} + \|M_h\vec{f}_k-\vec{f}_k\|_{L^{p(\cdot)}(W)}\\
&\quad \les \|\vec{f}-\vec{f}_k\|_{L^{p(\cdot)}(W)} + \|M_h\vec{f}_k-\vec{f}_k\|_{L^{p(\cdot)}(W)} \\
&\quad \les \epsilon.
\end{align*}
Thus, (ii) holds. This completes the proof.
\end{proof}

\section{By approximate identity}\label{wbl-s-3}
In this section, we give a sufficient and necessary condition for a precompact set in  matrix-weighted Lebesgue space with variable exponent by approximate identity. To state our results we need the following lemma.

\begin{lem}[see {\cite[Theorem 1.3]{cp-1}}]\label{wbl-L7}
Let $p(\cdot) \in \cp(\rn)\cap C^{\log}(\rn)$ and $W \in \mathcal{A}_{p(\cdot)}$. Let $\phi \in C^\infty_c(B(0,1))$ be
a nonnegative, radially symmetric and decreasing function with $\int_{\rn} \phi(x) =1$. Let $\phi_r(x) =r^{-n} \phi(x/r)$, $r>0$.
Then  for each $\vec{f} \in L^{p(\cdot)}(W)$,
\[\sup_{r>0} \|\phi_r \ast \vec{f}\|_{L^{p(\cdot)}(W)} \leq C [W]_{\mathcal{A}_{p(\cdot)}} \| \vec{f}\|_{L^{p(\cdot)}(W)}.\]
Furthermore, for every $\vec{f} \in L^{p(\cdot)}(W)$,
\[ \lim_{r \rightarrow 0}\|\phi_r \ast \vec{f}-\vec{f}\|_{L^{p(\cdot)}(W)} =0.\]
\end{lem}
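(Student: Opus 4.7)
The plan is to reduce both assertions to tools already established: the pointwise domination of $\phi_r \ast \vec{f}$ by an auxiliary matrix averaging operator (whose $L^{p(\cdot)}(W)$-boundedness follows the same template as Lemma \ref{wbl-L5}) together with a standard density-plus-$3\epsilon$ argument that uses Lemma \ref{wbl-L8}.

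For the uniform bound, the key observation is that since $\phi$ is nonnegative, radially symmetric and radially decreasing with $\int_{\rn}\phi = 1$, a layer-cake decomposition writes $\phi$ as a convex combination of normalized indicators $\frac{1}{|B(0,s)|}\chi_{B(0,s)}$; rescaling by $r$ yields, for each $x$ and each vector-valued $\vec{f}$, the pointwise majorisation
\[
|W(x)(\phi_r \ast \vec{f})(x)| \;\leq\; C\sup_{s>0}\frac{1}{|B(x,s)|}\int_{B(x,s)} |W(x) W^{-1}(y)|_{\rm op}\,|W(y)\vec{f}(y)|\,dy.
\]
The right-hand side is precisely the auxiliary matrix maximal operator appearing in the proof of \cite[Theorem 4.1]{cp-1}. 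Under the hypothesis $W \in \mathcal{A}_{p(\cdot)}$ together with $p(\cdot)\in\cp(\rn)\cap C^{\log}(\rn)$, this operator is bounded on $L^{p(\cdot)}(\rn)$ with norm controlled by $[W]_{\mathcal{A}_{p(\cdot)}}$; the log-Hölder hypothesis enters exactly at the step where one invokes the $L^{p(\cdot)}$-boundedness of the classical Hardy--Littlewood maximal operator (which then dualizes against $L^{p'(\cdot)}$ using Lemma \ref{wbl-L2} and the definition of $\mathcal{A}_{p(\cdot)}$). Taking the supremum over $r>0$ and then the $L^{p(\cdot)}$-norm delivers the uniform inequality.

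For the convergence assertion, let $\vec{f}\in L^{p(\cdot)}(W)$ and $\epsilon>0$. By Lemma \ref{wbl-L8} choose $\vec{g}\in C^\infty_c(\rn)$ with $\|\vec{f}-\vec{g}\|_{L^{p(\cdot)}(W)}<\epsilon$, and decompose
\[
\phi_r\ast\vec{f}-\vec{f} \;=\; \phi_r\ast(\vec{f}-\vec{g}) \;+\; (\phi_r\ast\vec{g}-\vec{g}) \;+\; (\vec{g}-\vec{f}).
\]
The first term has $L^{p(\cdot)}(W)$-norm bounded by $C[W]_{\mathcal{A}_{p(\cdot)}}\epsilon$ uniformly in $r$ by the first part just proved; the third is $\epsilon$ by construction. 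For the middle term, when $r\leq 1$ the function $\phi_r\ast\vec{g}$ is supported in a fixed compact set $K$ (a $1$-neighbourhood of $\mathrm{supp}\,\vec{g}$) and converges uniformly to $\vec{g}$ on $K$ by standard mollifier theory. Hence, estimating
\[
\|\phi_r\ast\vec{g}-\vec{g}\|_{L^{p(\cdot)}(W)} \;\leq\; \bigl\||W|_{\rm op}\chi_K\bigr\|_{L^{p(\cdot)}}\,\|\phi_r\ast\vec{g}-\vec{g}\|_{L^\infty},
\]
the factor $\bigl\||W|_{\rm op}\chi_K\bigr\|_{L^{p(\cdot)}}$ is finite by Lemma \ref{wbl-L9} and Remark \ref{wbl-R1} (since $|W|_{\rm op}\in A_{p(\cdot)}$ and hence lies in $L^{p(\cdot)}_{\rm loc}$), so the middle term tends to $0$ as $r\to 0$. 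Combining gives $\limsup_{r\to0}\|\phi_r\ast\vec{f}-\vec{f}\|_{L^{p(\cdot)}(W)}\lesssim \epsilon$, and $\epsilon$ is arbitrary.

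The main obstacle is the first step: pushing the classical radial-decreasing majorisation through the matrix weight. In the scalar case one simply dominates $\phi_r\ast f$ by $CMf$ and appeals to boundedness of $M$ on $L^{p(\cdot)}(\rn)$. With a matrix weight $W$ this is no longer adequate because $|W(x)(\phi_r\ast\vec{f})(x)|$ cannot be separated cleanly from $W(x)$ itself; one has to insert $W^{-1}(y)W(y)$ inside the convolution integral and bound what remains by the matrix-averaging operator of \cite{cp-1}, which is the technical heart of the argument and the reason why the $C^{\log}(\rn)$ hypothesis (needed for the maximal operator bound underlying that operator) cannot be dropped.
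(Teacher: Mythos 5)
The paper itself does not prove this lemma: it is imported verbatim from \cite[Theorem 1.3]{cp-1}, so there is no in-paper argument to measure yours against. Taken on its own terms, the second half of your proof (the density-plus-$3\epsilon$ argument for $\lim_{r\to 0}\|\phi_r\ast\vec f-\vec f\|_{L^{p(\cdot)}(W)}=0$) is correct and uses only tools actually available here: Lemma \ref{wbl-L8} for density, the uniform bound from the first part to control $\phi_r\ast(\vec f-\vec g)$, and Lemma \ref{wbl-L9} together with Remark \ref{wbl-R1} to guarantee $\||W|_{\rm op}\chi_K\|_{L^{p(\cdot)}}<\infty$. This also mirrors, and in fact cleans up, the argument the paper gives for the analogous Lemma \ref{wbl-L6}.

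The gap is in the first half. After the (correct) layer-cake majorization you pass to a supremum over radii and assert that the resulting Christ--Goldberg-type maximal operator
\[
\vec g\longmapsto \sup_{s>0}\frac{1}{|B(x,s)|}\int_{B(x,s)}|W(x)W^{-1}(y)|_{\rm op}\,|\vec g(y)|\,dy
\]
is bounded on $L^{p(\cdot)}(\rn)$ with norm $\lesssim[W]_{\mathcal{A}_{p(\cdot)}}$. That is strictly stronger than the uniform-in-radius averaging bound of Lemma \ref{wbl-L5} --- uniform boundedness of the family $\{M_s\}_{s>0}$ does not give boundedness of $\sup_s|M_s\vec g|$ --- it is not among the results quoted in this paper, and in the matrix-weighted variable-exponent setting it is precisely the estimate that \cite{cp-1} takes pains to avoid. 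The repair is to \emph{not} take the supremum: since $\phi$ is radial and decreasing, the layer-cake identity gives $\phi_r(z)=r^{-n}\int_0^\infty\chi_{B(0,r\rho(t))}(z)\,dt$ with $r^{-n}\int_0^\infty|B(0,r\rho(t))|\,dt=\|\phi\|_{L^1}=1$, hence $\phi_r\ast\vec f(x)$ is an exact convex combination over $t$ of the averages $M_{r\rho(t)}\vec f(x)$. Applying Minkowski's integral inequality in $L^{p(\cdot)}$ (legitimate since $p^->1$; the paper already uses it in the proof of Theorem \ref{wbl-T2}) and then Lemma \ref{wbl-L5} uniformly in the radius yields $\|\phi_r\ast\vec f\|_{L^{p(\cdot)}(W)}\le C[W]_{\mathcal{A}_{p(\cdot)}}\|\vec f\|_{L^{p(\cdot)}(W)}$ with no maximal function needed. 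With that substitution your argument goes through; note also that this route does not use $C^{\log}(\rn)$ for the uniform bound, so your claim that the log-H\"older hypothesis enters through the Hardy--Littlewood maximal operator should be dropped or relocated.
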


\begin{thm}\label{t51}
Let $p(\cdot) \in \cp(\rn)\cap C^{\log}(\rn)$ and $W \in \mathcal{A}_{p(\cdot)}$.
Let $p(\cdot) \in \cp(\rn)\cap C^{\log}(\rn)$ and $W \in \mathcal{A}_{p(\cdot)}$. Let $\phi\in C^\infty_c(B(0,1))$ be a nonnegative, radially symmetric and decreasing function with $\int_{\rn} \phi(x) =1$. Let $\phi_r(x) =r^{-n} \phi(x/r)$, $r>0$.
Suppose that $\cf$ is a  subset in $L^{p(\cdot)}(W)$. Then $\cf$  is precompact if and only if

{\rm (i)} $\cf$ is bounded, i. e., $\sup_{\vec{f} \in \cf} \|\vec{f}\|_{L^{p(\cdot)}(W)} < \infty$.

{\rm (ii)} $\cf$ is equiapproximate, that is,
\[\lim_{r \rightarrow0^+} \sup_{\vec{f} \in \cf} \|\vec{f}\ast \phi_r-\vec{f}\|_{L^{p(\cdot)}(W)}=0.\]

{\rm (iii)} $\cf$ uniformly vanishes at infinity, that is,
\[\lim_{R \rightarrow \infty} \sup_{f \in \cf} \|f\chi_{B^c(0,R)}\|_{L^{p(\cdot)}(W)}=0.\]
\end{thm}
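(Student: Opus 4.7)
The plan is to mirror closely the argument given for Theorem \ref{t41}, replacing the average operator $M_r$ by the convolution operator $\phi_r\ast$ and exploiting the extra smoothness of $\phi_r\in C_c^\infty(B(0,1))$. Sufficiency and necessity will be handled separately, with sufficiency being the main work.

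For sufficiency, suppose $\cf$ satisfies (i)--(iii). The strategy is to show $\cf$ is totally bounded by constructing, for each small $\epsilon>0$, a finite $\epsilon$-net. By condition (ii), it suffices to produce a finite $\epsilon$-net for $\phi_r\ast\cf$ for $r$ chosen so small that $\sup_{\vec f\in\cf}\|\phi_r\ast\vec f-\vec f\|_{L^{p(\cdot)}(W)}<\epsilon/8$, and by condition (iii) we fix $R>0$ so that $\sup_{\vec f\in\cf}\|\vec f\chi_{B^c(0,R)}\|_{L^{p(\cdot)}(W)}<\epsilon/8$. I then aim to apply the vector-valued Arzel\`a--Ascoli theorem (Lemma \ref{wbl-L1}) to show $\phi_r\ast\cf$, viewed as a family of continuous $\rr^m$-valued functions on $\bar B(0,R)$, is precompact in $C(\bar B(0,R),\rr^m)$. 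Uniform boundedness follows from the pointwise estimate
\[
|\phi_r\ast\vec f(x)|\le \|\phi_r\|_{L^\infty}\int_{B(x,r)}|\vec f(y)|\,dy\lesssim \|\phi_r\|_{L^\infty}\|\vec f\chi_{B(0,2R)}\|_{L^{p(\cdot)}(W)}\||W^{-1}|_{\rm op}\chi_{B(0,2R)}\|_{L^{p'(\cdot)}},
\]
exactly as in (\ref{wbl-9})--(\ref{wbl-1}), where finiteness of the last factor uses Lemma \ref{wbl-L9} together with $W^{-1}\in\mathcal A_{p'(\cdot)}$. Equicontinuity is even easier here than for $M_r$: since $\phi\in C_c^\infty$, the function $\phi_r$ is Lipschitz, so
\[
|\phi_r\ast\vec f(y)-\phi_r\ast\vec f(x)|\le \|\nabla\phi_r\|_{L^\infty}\,|x-y|\int_{B(0,R+r)}|\vec f(z)|\,dz,
\]
and the integral is controlled uniformly in $\vec f\in\cf$ by H\"older's inequality, again via $|W^{-1}|_{\rm op}\in A_{p'(\cdot)}$. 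Pointwise precompactness of $\{\phi_r\ast\vec f(x):\vec f\in\cf\}$ in $\rr^m$ is immediate from uniform boundedness. Once Arzel\`a--Ascoli produces a finite $\epsilon/\gamma$-net $\{\phi_r\ast\vec f_1,\ldots,\phi_r\ast\vec f_k\}$ in the sup-norm on $\bar B(0,R)$, with $\gamma=2\||W|_{\rm op}\chi_{B(0,R)}\|_{L^{p(\cdot)}}$ (which is finite by Lemma \ref{wbl-L9}), the triangle-inequality assembly in (\ref{wbl-12}) and the lines following it yields a finite $\epsilon$-net of $\phi_r\ast\cf$, and hence of $\cf$, in $L^{p(\cdot)}(W)$.

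For necessity, assume $\cf$ is precompact; then (i) is automatic. Given $\epsilon>0$, pick an $\epsilon$-net $\{\vec f_1,\ldots,\vec f_N\}\subset\cf$. Condition (iii) follows by the monotone convergence argument used in Theorem \ref{t41}: for each $k$ choose $H_k$ so large that $\|\vec f_k\chi_{B^c(0,H_k)}\|_{L^{p(\cdot)}(W)}<\epsilon/2$, set $H=\max_k H_k$, and for arbitrary $\vec f\in\cf$ approximate by the nearest $\vec f_k$ in the net. For (ii), invoke Lemma \ref{wbl-L7}: the boundedness statement gives $\|\phi_r\ast(\vec f-\vec f_k)\|_{L^{p(\cdot)}(W)}\lesssim \|\vec f-\vec f_k\|_{L^{p(\cdot)}(W)}$ uniformly in $r$, and the convergence statement gives $\max_{1\le k\le N}\|\phi_r\ast\vec f_k-\vec f_k\|_{L^{p(\cdot)}(W)}<\epsilon$ for $r$ small. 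A triangle-inequality splitting $\phi_r\ast\vec f-\vec f=\phi_r\ast(\vec f-\vec f_k)+(\phi_r\ast\vec f_k-\vec f_k)+(\vec f_k-\vec f)$ then yields (ii).

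The main obstacle I anticipate is the equicontinuity of $\phi_r\ast\cf$ on $\bar B(0,R)$: one must make sure the estimate depends only on $r$, $R$, and the uniform $L^{p(\cdot)}(W)$-bound of $\cf$, not on the individual $\vec f$. The Lipschitz bound for $\phi_r$ together with H\"older's inequality resolves this, but choosing the right intermediate ball (here $B(0,R+r)\subset B(0,2R)$ for $r<R$) to keep $\||W^{-1}|_{\rm op}\chi_{\,\cdot\,}\|_{L^{p'(\cdot)}}$ uniformly finite is the key technical point that has to be handled carefully. Everything else reduces to the template already set by the proof of Theorem \ref{t41}.
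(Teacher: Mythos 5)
Your proposal is correct and follows essentially the same route as the paper's proof: convolve with $\phi_r$, apply the vector-valued Arzel\`a--Ascoli theorem on $\bar B(0,R)$ using H\"older's inequality and $|W^{-1}|_{\rm op}\in A_{p'(\cdot)}$ for uniform boundedness and equicontinuity, then assemble the finite $\epsilon$-net, with the converse handled via an $\epsilon$-net and Lemma \ref{wbl-L7}. The only cosmetic difference is that you derive equicontinuity from the Lipschitz bound $\|\nabla\phi_r\|_{L^\infty}$ while the paper uses uniform continuity of $\phi$ on the unit ball; both are equally valid.
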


\begin{proof}
Assuming $\cf$ satisfies (i) - (iii).
By condition (ii), we only need to prove that $\phi_r \ast \cf :=\{\phi_r \ast \vec{f}: \vec{f} \in \cf\}$ is a precompact for sufficiently  small $r$.
To do so we first prove that $\phi_r \ast \cf$ is a precompact set in $C(\bar{B}(0,R), \rr^m)$ for every $R>0$.
Since $W \in \mathcal{A}_{p(\cdot)}$, $W^{-1} \in \mathcal{A}_{p'(\cdot)}$. Thus, by Lemma \ref{wbl-L9}, $|W^{-1}|_{\rm op} \in A_{p'(\cdot)}$.
Therefore, by Remark \ref{wbl-R1},  $|W^{-1}|_{\rm op} \in L^{p'(\cdot)}_{\rm loc}(\rn)$.
Then by H\"{o}lder's inequality, we have
\begin{align}\label{wbl-14}
|\vec{f}\ast \phi_r(x)|
& = \int_{B(0,R+r)} \vec{f}(y)\phi_r(x-y) dy  \no \\
&\leq \|\phi_r\|_{L^\infty} \int_{B(0,R+r)} |W^{-1}(y) W(y)\vec{f}(y)| dy \no \\
& \leq \|\phi_r\|_{L^\infty} \||W^{-1}|_{\rm op} \chi_{B(0,R+r)}\|_{L^{p'(\cdot)}} \|\vec{f}\|_{L^{p(\cdot)}(W)}.
\end{align}
Thus, $\phi_r \ast \cf$ is uniformly bounded. Next, we show $\phi_r \ast \cf$ is equicontinuous. For any $x_1$, $x_2 \in \bar{B}(0,R)$, by  (\ref{wbl-14}), we have
\begin{align}\label{wbl-15}
& |(\vec{f}\ast \phi_r(x_1) - \vec{f}\ast \phi_r(x_2))| \no \\
& \quad\leq  \int_{B(0,R+r)} |\vec{f}(y) (\phi_r(x_1-y)-\phi_r(x_2-y))| dy \no \\
& \quad \leq \||W^{-1}|_{\rm op} \chi_{B(0,R+r)}\|_{L^{p'(\cdot)}} \|\vec{f}\|_{L^{p(\cdot)}(W)} \sup_{y\in B(0,r+R)} |\phi_r(x_1-y)-\phi_r(x_2-y)|.
\end{align}
It remains to show that
\begin{equation}\label{wbl-5}
\sup_{y\in B(0,r+R)} |\phi_r(x_1-y)-\phi_r(x_2-y)| <\epsilon/ \||W^{-1}|_{\rm op} \chi_{B(0,R+r)}\|_{L^{p'(\cdot)}}.
\end{equation}
It follows that
\begin{align*}
|\phi_r(x_1-y)-\phi_r(x_2-y)| \leq \frac{1}{r^n} \Big|\phi \big(\frac{x_1-y}{r} \big)-\phi \big(\frac{x_2-y}{r} \big) \Big|.
\end{align*}
For any $\epsilon \in (0,1)$, since $\phi$ is uniformly continuous on the unit ball, there is a $\delta>0$ such that
\[|\phi(z_1)-\phi(z_2)| < r^n\epsilon/ \||W^{-1}|_{\rm op} \chi_{B(0,R+r)}\|_{L^{p'(\cdot)}}<1, \ |z_1-z_2|<\delta.\]
Thus, if $|x_1-x_2|<r\delta$, then we have
\[\Big|\phi \big(\frac{x_1-y}{r} \big)-\phi \big(\frac{x_2-y}{r} \big) \Big| < r^n\epsilon/ \||W^{-1}|_{\rm op} \chi_{B(0,R+r)}\|_{L^{p'(\cdot)}}<1 .\]
Hence,  we obtain (\ref{wbl-5}). By (ii), (\ref{wbl-15}) and (\ref{wbl-5}), we obtain $\phi_r\ast \cf$ is uniformly equicontinuous.

Finally, it remains to show that $\cf$ has a finite $\epsilon$-net for $\epsilon \in (0,1)$.
By  (iii), there exists $R > 0$ such that, for each $\vec{f} \in \cf$,
\[ \sup_{\vec{f} \in \cf} \|\vec{f} \chi_{B^c(0,R)}\|_{L^{p(\cdot)}(W)}<\epsilon/5 .\]
By  (ii), we choose $r$ small enough so that, for each $\vec{f} \in \cf$,
\[ \sup_{\vec{f} \in \cf} \|\vec{f}\ast \phi_r -\vec{f}\|_{L^{p(\cdot)}(W)}<\epsilon / 5.\]
By the fact that $\phi_r\ast \cf$ is precompact in $C(\bar{B}(0,R), \rr^m)$, which was proved above, there exists $\{\vec{f}_1, \vec{f}_2, \ldots , \vec{f}_N\} \subset \cf$
such that $\{\vec{f}_1 \ast \phi_r, \vec{f}_2\ast \phi_r, \ldots , \vec{f}_N\ast \phi_r\}$
is a finite $\epsilon/\beta$-net of $\phi_r\ast \cf$ in the norm of $C(\bar{B}(0,R), \rr^m)$, where $\beta=5\| |W|_{\rm op} \chi_{B(0,R)}\|_{L^{p(\cdot)}}$.
Thus, we have
\begin{align}\label{wbl-13}
& \|(\vec{f}\ast \phi_r -\vec{f}_j\ast \phi_r)\chi_{B(0,R)}\|_{L^{p(\cdot)}(W)} \no\\
& \quad \leq  \| |W|_{\rm op} |(\vec{f}\ast \phi_r -\vec{f}_j\ast \phi_r)\chi_{B(0,R)}|\|_{L^{p(\cdot)}} \no\\
& \quad \leq \| |W|_{\rm op} \chi_{B(0,R)}\|_{L^{p(\cdot)}}  \sup_{y \in B(0,R)} |\vec{f}\ast \phi_r(y) -\vec{f}_j\ast \phi_r(y)| \no\\
&\quad  \leq \epsilon/5.
\end{align}
Then, by (\ref{wbl-13})
\begin{align*}
&\|\vec{f} - \vec{f}_j\|_{L^{p(\cdot)}(W)}\\
& \leq \|(\vec{f} - \vec{f}_j) \chi_{B(0,R)}\|_{L^{p(\cdot)}(W)} +\|(\vec{f} - \vec{f}_j) \chi_{B^c(0,R)}\|_{L^{p(\cdot)}(W)} \\
& \leq \|(\vec{f} - \vec{f} \ast \phi_r)\chi_{B(0,R)}\|_{L^{p(\cdot)}(W)} + \|(\vec{f} \ast \phi_r - \vec{f}_j \ast \phi_r)\chi_{B(0,R)}\|_{L^{p(\cdot)}(W)}\\
& \quad + \|(\vec{f}_j \ast \phi_r - \vec{f}_j)\chi_{B(0,R)}\|_{L^{p(\cdot)}(W)} + \|\vec{f}\chi_{B^c(0,R)}\|_{L^{p(\cdot)}(W)} + \|\vec{f}_j\chi_{B^c(0,R)}\|_{L^{p(\cdot)}(W)}\\
& \leq  \epsilon.
\end{align*}

Conversely, if $\cf$ is a precompact set in $L^{p(\cdot)}(W)$, obviously, (i) is true.
As for (ii), for each $\epsilon>0$, let $\{\vec{f}_k\}_{k=1}^N \subseteq \cf$ be a $\epsilon$-net of $\cf$. By Lemma \ref{wbl-L7}, there exists $r>0$ such that for $t < r$,
\[ \max_{1\leq k\leq N} \|\vec{f}_k \ast\phi_t -\vec{f}_k\|_{L^{p(\cdot)}(W)} < \epsilon .\]
Then, by Lemma \ref{wbl-L7} again, we obtain
\begin{align*}
\|\vec{f} \ast\phi_t -\vec{f}\|_{L^{p(\cdot)}(W)}
& \leq \|\vec{f} \ast\phi_t -\vec{f}_k \ast \phi_t\|_{L^{p(\cdot)}(W)} +  \|\vec{f}_k \ast \phi_t-\vec{f}_k\|_{L^{p(\cdot)}(W)} \\
& \quad + \|\vec{f}_k-\vec{f}\|_{L^{p(\cdot)}(W)}\\
& \les \|\vec{f}_k \ast \phi_t-\vec{f}_k\|_{L^{p(\cdot)}(W)} + \|\vec{f}_k-\vec{f}\|_{L^{p(\cdot)}(W)} \\
&\les \epsilon .
\end{align*}

(iii) is proved in Theorem \ref{t41}.

Thus the proof is finished.\end{proof}

\section{Precompactness in Sobolev space}\label{wbl-s-4}
In this section, we consider the precompactness in matrix weighted Sobolev spaces with variable exponent. To go on, we recall some notions.

\begin{defn}
Suppose that $f \in L^1_{\rm loc}(\rn)$. If there exists a function $g_j \in L^1_{\rm loc}(\rn)$  such that for any $\phi \in C^\infty_0(\rn)$,
\[ \int_{\rn} f(x) \partial_j \phi(x) dx = -\int_{\rn} g_j(x) \phi(x) dx . \]
Then $g_j$ is called the weak derivative of $f$ with respect to $x_j$. Denote $g_j$ by $\partial_jf$.
\end{defn}

If $\vec{f}=(f_1,f_2,\ldots,f_m)^{\rm T} \in L^1_{\rm loc}(\rn)$, then define the weak derivative of $\vec{f}$ with respect to $x_j$ by $\partial_j \vec{f}=(\partial_j f_1,\partial_j f_2,\ldots, \partial_j f_m)^{\rm T}$.
 Denote by $\mathcal{W}^{1,1}_{\rm loc}(\rn)$ the set of function $f \in L^1_{\rm loc}(\rn)$ such that $\partial_j f_i$ exists, $j=1,2,\ldots,n$, $i=1,2,\ldots,m$.
 Let $D \vec{f}(\partial_j f_i)_{i,j}$ be the Jacobian matrix of $\vec{f}$.

\begin{defn}
Let $p(\cdot) \in \cp(\rn)$ and $W \in \mathcal{A}_{p(\cdot)}$.
The  $\mathcal{W}^{1,p(\cdot)}(W)$ is the collection of $\vec{f} \in \mathcal{W}^{1,1}_{\rm loc}(\rn)$ such that
\[ \|\vec{f}\|_{\mathcal{W}^{1,p(\cdot)}(W)} :=  \|\vec{f}\|_{L^{p(\cdot)}(W)} + \sum^n_{j=0}\|\partial \vec{f}_j\|_{L^{p(\cdot)}(W)} < \infty.\]
\end{defn}

\begin{thm}\label{wbl-T3}
Let $p(\cdot) \in \cp(\rn)$ and $W \in \mathcal{A}_{p(\cdot)}$. Suppose that $\cf$ is a  subset in $\mathcal{W}^{1,p(\cdot)}(W)$. If

{\rm (i)} $\cf$ is bounded, i. e., $\sup_{\vec{f} \in \cf} \|\vec{f}\|_{\mathcal{W}^{1,p(\cdot)}(W)} < \infty$.

{\rm (ii)} $\cf$ is equicontinuous, that is,
\[\lim_{r \rightarrow0^+} \sup_{\vec{f} \in \cf} \sup_{y \in B(0,r)} \|\tau_y \vec{f}-\vec{f}\|_{\mathcal{W}^{1,p(\cdot)}(W)}=0 .\]

{\rm (iii)} $\cf$ uniformly vanishes at infinity, that is,
\[\lim_{R \rightarrow \infty} \sup_{\vec{f} \in \cf} \|\vec{f}\chi_{B^c(0,R)}\|_{\mathcal{W}^{1,p(\cdot)}(W)}=0.\]
Then $\cf$  is a precompact set in $\mathcal{W}^{1,p(\cdot)}(W)$.
\end{thm}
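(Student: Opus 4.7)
The plan is to reduce the theorem to Theorem \ref{wbl-T2}, applied in parallel to $\cf$ and to each of the derivative families
\[
\cf_j := \{\partial_j \vec{f} : \vec{f} \in \cf\}, \qquad j=1,\ldots,n.
\]
Since $\|\cdot\|_{\mathcal{W}^{1,p(\cdot)}(W)}$ dominates $\|\cdot\|_{L^{p(\cdot)}(W)}$ applied both to $\vec{f}$ and to each $\partial_j \vec{f}$, and since weak derivatives commute with translation (so $\partial_j \tau_y \vec{f} = \tau_y \partial_j \vec{f}$), hypotheses (i)--(iii) descend immediately to the three corresponding $L^{p(\cdot)}(W)$-hypotheses for both $\cf$ and each $\cf_j$. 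By Theorem \ref{wbl-T2}, each of these $n+1$ families is therefore precompact in $L^{p(\cdot)}(W)$.

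Given any sequence $\{\vec{f}^{(k)}\} \subset \cf$, I would use an $(n+1)$-fold diagonal extraction to produce a subsequence (still denoted $\{\vec{f}^{(k)}\}$) such that $\vec{f}^{(k)} \to \vec{f}$ in $L^{p(\cdot)}(W)$ and $\partial_j \vec{f}^{(k)} \to \vec{g}_j$ in $L^{p(\cdot)}(W)$ for each $j=1,\ldots,n$. Precompactness in the Sobolev norm will follow once one shows $\vec{f} \in \mathcal{W}^{1,p(\cdot)}(W)$ with $\partial_j \vec{f} = \vec{g}_j$, since then the subsequence is Cauchy and converges to $\vec{f}$ in $\mathcal{W}^{1,p(\cdot)}(W)$.

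The step I expect to be the main obstacle is this identification of weak derivatives in the limit. My approach would be to upgrade $L^{p(\cdot)}(W)$-convergence to $L^1_{\rm loc}(\rn)$-convergence via the H\"older-type estimate
\[
\|\vec{h}\|_{L^1(K)} \lesssim \bigl\||W^{-1}|_{\rm op}\chi_K\bigr\|_{L^{p'(\cdot)}} \|\vec{h}\|_{L^{p(\cdot)}(W)},
\]
whose right-hand side is finite because $W \in \mathcal{A}_{p(\cdot)}$ implies $W^{-1} \in \mathcal{A}_{p'(\cdot)}$, so Lemma \ref{wbl-L9} together with Remark \ref{wbl-R1} gives $|W^{-1}|_{\rm op} \in L^{p'(\cdot)}_{\rm loc}(\rn)$ (this is the same device used in the proof of Lemma \ref{wbl-L6}). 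Then for each $\phi \in C^\infty_c(\rn)$, passing to the limit in the identity
\[
\int_{\rn} \vec{f}^{(k)}(x)\, \partial_j \phi(x)\, dx = -\int_{\rn} \partial_j \vec{f}^{(k)}(x)\, \phi(x)\, dx
\]
yields $\int_{\rn} \vec{f}\,\partial_j \phi\, dx = -\int_{\rn} \vec{g}_j\,\phi\, dx$, so $\vec{g}_j = \partial_j \vec{f}$ weakly. This places $\vec{f}$ in $\mathcal{W}^{1,p(\cdot)}(W)$ and establishes that $\cf$ is precompact in $\mathcal{W}^{1,p(\cdot)}(W)$. The diagonal extraction and the final identification are the only analytically substantial points; all other estimates are routine consequences of Theorem \ref{wbl-T2} and the $\mathcal{A}_{p(\cdot)}$ properties already developed in the paper.
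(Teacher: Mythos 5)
Your proposal is correct and follows essentially the same route as the paper: the paper's proof is the one-line observation that precompactness of $\cf$ in $\mathcal{W}^{1,p(\cdot)}(W)$ is equivalent to precompactness of $\cf$ and of each $\{\partial_j\vec{f}:\vec{f}\in\cf\}$ in $L^{p(\cdot)}(W)$, followed by an appeal to Theorem \ref{wbl-T2}. Your write-up additionally supplies the justification of that equivalence (diagonal extraction plus identification of the weak derivatives of the limit via the $L^1_{\rm loc}$ embedding), which the paper asserts without proof; this is a genuine and correctly handled refinement of the same argument rather than a different approach.
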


\begin{proof}
Observe that $\cf$ is precompact in $\mathcal{W}^{1,p(\cdot)}(W)$ if and only if $\cf$ and $\{\partial_j\vec{f}: \vec{f} \in \cf\}$ are precompact in $L^{p(\cdot)}(W)$ for $j=1,2,\ldots,n$. Thus, we have Theorem \ref{wbl-T3} by Theorem \ref{wbl-T2}.
\end{proof}

Similarly, by Theorems \ref{t41} and \ref{t51} we have the following results.
\begin{thm}
Let $p(\cdot) \in \cp(\rn)$ and $W \in \mathcal{A}_{p(\cdot)}$. Suppose that $\cf$ is a  subset in $\mathcal{W}^{1,p(\cdot)}(W)$. If

{\rm (i)} $\cf$ is bounded, i. e., $\sup_{\vec{f} \in \cf} \|\vec{f}\|_{\mathcal{W}^{1,p(\cdot)}(W)} < \infty$.

{\rm (ii)} $\cf$ is equicontinuous, that is,
\[\lim_{r \rightarrow0^+} \sup_{\vec{f} \in \cf}  \|M_r \vec{f}-\vec{f}\|_{\mathcal{W}^{1,p(\cdot)}(W)}=0 .\]

{\rm (iii)} $\cf$ uniformly vanishes at infinity, that is,
\[\lim_{R \rightarrow \infty} \sup_{\vec{f} \in \cf} \|f\chi_{B^c(0,R)}\|_{\mathcal{W}^{1,p(\cdot)}(W)}=0.\]
Then $\cf$  is a precompact set in $\mathcal{W}^{1,p(\cdot)}(W)$.
\end{thm}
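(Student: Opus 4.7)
The plan is to mimic the reduction used in the proof of Theorem \ref{wbl-T3}. The set $\cf$ is precompact in $\mathcal{W}^{1,p(\cdot)}(W)$ if and only if each of the families $\cf$ and $\cf_j := \{\partial_j \vec{f} : \vec{f} \in \cf\}$ for $j = 1,\ldots,n$ is precompact in $L^{p(\cdot)}(W)$, because the Sobolev norm is a sum of $L^{p(\cdot)}(W)$-norms and a finite product of (pre)compact sets is (pre)compact. So the whole task reduces to verifying hypotheses (i)--(iii) of Theorem \ref{t41} for each of these $n+1$ families in $L^{p(\cdot)}(W)$.

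For $\cf$ itself, the three conditions are immediate from the assumed Sobolev-level conditions, since $\|\cdot\|_{L^{p(\cdot)}(W)} \leq \|\cdot\|_{\mathcal{W}^{1,p(\cdot)}(W)}$. For $\cf_j$, the boundedness follows at once from (i) applied term by term. For the uniform decay at infinity, the same inequality gives
\[
\|(\partial_j \vec{f})\chi_{B^c(0,R)}\|_{L^{p(\cdot)}(W)} \leq \|\vec{f}\chi_{B^c(0,R)}\|_{\mathcal{W}^{1,p(\cdot)}(W)},
\]
so (iii) of Theorem \ref{t41} is inherited from (iii) here.

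The delicate part is checking the equicontinuity condition $\|M_r(\partial_j\vec{f}) - \partial_j\vec{f}\|_{L^{p(\cdot)}(W)} \to 0$ uniformly in $\vec{f} \in \cf$. The key technical step is to justify that $M_r$ commutes with weak partial derivatives, i.e.\ $\partial_j(M_r \vec{f}) = M_r(\partial_j \vec{f})$ almost everywhere for $\vec{f} \in \mathcal{W}^{1,p(\cdot)}(W)$. This is a standard fact from the translation-invariant representation $M_r \vec{f}(x) = |B(0,r)|^{-1}\int_{B(0,r)}\vec{f}(x+z)\,dz$ together with the definition of weak derivative (differentiating under the integral sign, or equivalently a Fubini argument on the defining identity for $\partial_j\vec{f}$). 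Granted this commutation, one gets
\[
\|M_r(\partial_j \vec{f}) - \partial_j \vec{f}\|_{L^{p(\cdot)}(W)} = \|\partial_j(M_r \vec{f} - \vec{f})\|_{L^{p(\cdot)}(W)} \leq \|M_r \vec{f} - \vec{f}\|_{\mathcal{W}^{1,p(\cdot)}(W)},
\]
so (ii) of Theorem \ref{t41} for $\cf_j$ follows from (ii) here, uniformly in $\vec{f}$.

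With all three hypotheses of Theorem \ref{t41} verified for $\cf$ and for each $\cf_j$, Theorem \ref{t41} produces precompactness of each family in $L^{p(\cdot)}(W)$, and the initial reduction concludes that $\cf$ is precompact in $\mathcal{W}^{1,p(\cdot)}(W)$. The only nontrivial ingredient beyond what has already been established is the commutation of $M_r$ with weak derivatives; everything else is bookkeeping that flows automatically from the definition of the Sobolev norm and the earlier $L^{p(\cdot)}(W)$ result.
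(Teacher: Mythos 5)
Your proof is correct and follows essentially the same route as the paper, which simply invokes the reduction of Theorem \ref{wbl-T3} (precompactness in $\mathcal{W}^{1,p(\cdot)}(W)$ is equivalent to precompactness of $\cf$ and of each $\{\partial_j\vec{f}:\vec{f}\in\cf\}$ in $L^{p(\cdot)}(W)$) together with Theorem \ref{t41}. You additionally make explicit the one point the paper leaves implicit, namely the commutation $\partial_j(M_r\vec{f})=M_r(\partial_j\vec{f})$ needed to transfer the equicontinuity hypothesis to the derivative families; this is a correct and worthwhile addition, not a deviation.
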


\begin{thm}
Let $p(\cdot) \in \cp(\rn)\cap C^{\log}(\rn)$ and $W \in \mathcal{A}_{p(\cdot)}$.
Let $p(\cdot) \in \cp(\rn)\cap C^{\log}(\rn)$ and $W \in \mathcal{A}_{p(\cdot)}$. Let $\phi\in C^\infty_c(B(0,1))$ be a nonnegative, radially symmetric and decreasing function with $\int_{\rn} \phi(x) =1$. Let $\phi_r(x) =r^{-n} \phi(x/r)$, $r>0$. Suppose that $\cf$ is a subset in $\mathcal{W}^{1,p(\cdot)}(W)$. Then $\cf$  is precompact if and only if

{\rm (i)} $\cf$ is bounded, i. e., $\sup_{\vec{f} \in \cf} \|\vec{f}\|_{\mathcal{W}^{1,p(\cdot)}(W)} < \infty$.

{\rm (ii)} $\cf$ is equicontinuous, that is,
\[\lim_{r \rightarrow 0^+} \sup_{\vec{f} \in \cf}  \|\vec{f}\ast\phi_r-\vec{f}\|_{\mathcal{W}^{1,p(\cdot)}(W)}=0 .\]

{\rm (iii)} $\cf$ uniformly vanishes at infinity, that is,
\[\lim_{R \rightarrow \infty} \sup_{\vec{f} \in \cf} \|\vec{f}\chi_{B^c(0,R)}\|_{\mathcal{W}^{1,p(\cdot)}(W)}=0.\]
\end{thm}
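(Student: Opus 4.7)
The plan is to reduce the Sobolev statement to the $L^{p(\cdot)}(W)$ criterion already established in Theorem~\ref{t51}, exactly as Theorem~\ref{wbl-T3} reduces its claim to Theorem~\ref{wbl-T2}. The underlying fact is that the map $\iota:\mathcal{W}^{1,p(\cdot)}(W)\hookrightarrow L^{p(\cdot)}(W)^{n+1}$, $\vec{f}\mapsto(\vec{f},\partial_1\vec{f},\ldots,\partial_n\vec{f})$, is an isometric embedding (by the very definition of the Sobolev norm) onto a closed subspace. Consequently, $\cf\subset\mathcal{W}^{1,p(\cdot)}(W)$ is precompact if and only if each of the families $\cf$ and $\cf_j:=\{\partial_j\vec{f}:\vec{f}\in\cf\}$, $j=1,\ldots,n$, is precompact in $L^{p(\cdot)}(W)$.

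For the sufficient direction, I would verify the three conditions of Theorem~\ref{t51} (with respect to the $L^{p(\cdot)}(W)$-norm) for each of the families $\cf,\cf_1,\ldots,\cf_n$. Boundedness of each $\cf_j$ in $L^{p(\cdot)}(W)$ is immediate from (i), since $\|\partial_j\vec{f}\|_{L^{p(\cdot)}(W)}\le\|\vec{f}\|_{\mathcal{W}^{1,p(\cdot)}(W)}$, and uniform vanishing at infinity of each $\cf_j$ follows similarly from (iii). The crucial step is the equiapproximation of the derivatives: here I would invoke the standard mollifier identity $\partial_j(\vec{f}\ast\phi_r)=(\partial_j\vec{f})\ast\phi_r$ for $\vec{f}\in\mathcal{W}^{1,1}_{\rm loc}(\rn)$, which yields
\[
\|(\partial_j\vec{f})\ast\phi_r-\partial_j\vec{f}\|_{L^{p(\cdot)}(W)}
=\|\partial_j(\vec{f}\ast\phi_r-\vec{f})\|_{L^{p(\cdot)}(W)}
\le\|\vec{f}\ast\phi_r-\vec{f}\|_{\mathcal{W}^{1,p(\cdot)}(W)}.
\]
Thus (ii) in the Sobolev norm simultaneously controls the equiapproximation of all $\cf_j$ in $L^{p(\cdot)}(W)$. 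Theorem~\ref{t51} then delivers precompactness of $\cf,\cf_1,\ldots,\cf_n$ in $L^{p(\cdot)}(W)$, and the opening observation concludes precompactness of $\cf$ in $\mathcal{W}^{1,p(\cdot)}(W)$.

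For the converse, assume $\cf$ is precompact in $\mathcal{W}^{1,p(\cdot)}(W)$. Since each coordinate projection is continuous, all of $\cf,\cf_1,\ldots,\cf_n$ are precompact in $L^{p(\cdot)}(W)$. Condition (i) is immediate. Applying Theorem~\ref{t51} to each of these families and summing, while again using $(\partial_j\vec{f})\ast\phi_r=\partial_j(\vec{f}\ast\phi_r)$, gives
\[
\|\vec{f}\ast\phi_r-\vec{f}\|_{\mathcal{W}^{1,p(\cdot)}(W)}
=\|\vec{f}\ast\phi_r-\vec{f}\|_{L^{p(\cdot)}(W)}+\sum_{j=1}^n\|(\partial_j\vec{f})\ast\phi_r-\partial_j\vec{f}\|_{L^{p(\cdot)}(W)},
\]
which tends to $0$ uniformly in $\vec{f}\in\cf$, proving (ii); (iii) is handled analogously via the triangle inequality applied to a finite $\varepsilon$-net, as in the converse direction of Theorem~\ref{t41}.

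The main technical point to justify carefully is the commutation identity $\partial_j(\vec{f}\ast\phi_r)=(\partial_j\vec{f})\ast\phi_r$ for $\vec{f}\in\mathcal{W}^{1,p(\cdot)}(W)$. This is the classical mollification fact, but it requires that $\partial_j\vec{f}\in L^1_{\rm loc}(\rn)$ (so that the right-hand side makes pointwise sense); this in turn follows from $\partial_j\vec{f}\in L^{p(\cdot)}(W)$ via the same H\"older-type argument already used in the proof of Lemma~\ref{wbl-L6}, relying on $|W^{-1}|_{\rm op}\in A_{p'(\cdot)}$ provided by Lemma~\ref{wbl-L9}. Beyond verifying this identity, the argument is a purely formal reduction and introduces no estimates that are not already contained in Section~\ref{wbl-s-3}.
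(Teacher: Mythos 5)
Your proposal is correct and follows exactly the route the paper intends: reduce to componentwise precompactness of $\cf$ and $\{\partial_j\vec{f}:\vec{f}\in\cf\}$ in $L^{p(\cdot)}(W)$ and invoke Theorem~\ref{t51}, which is precisely the reduction the paper uses for Theorem~\ref{wbl-T3} and then asserts ``similarly'' for this statement. You in fact supply more detail than the paper does, notably the commutation identity $\partial_j(\vec{f}\ast\phi_r)=(\partial_j\vec{f})\ast\phi_r$ and the local integrability of $\partial_j\vec{f}$ needed to justify it, which is the only nontrivial point and is handled correctly.
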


\end{document}